\documentclass[11pt,reqno]{amsart}
\usepackage{amsmath,amssymb,mathrsfs}
\usepackage{graphicx,cite}
\usepackage{color}
\usepackage{subfigure}
\usepackage{bm}

\setlength{\topmargin}{-1.5cm}
\setlength{\oddsidemargin}{0.0cm}
\setlength{\evensidemargin}{0.0cm}
\setlength{\textwidth}{16.7cm}
\setlength{\textheight}{23cm}
\headheight 20pt
\headsep    26pt
\footskip 0.4in

\setlength{\itemsep}{0pt}
\setlength{\parsep}{0pt}
\setlength{\parskip}{2pt}

\newtheorem{theorem}{Theorem}[section]

\newtheorem{lemma}[theorem]{Lemma}

\newtheorem{assumption}{Assumption}
\newtheorem{definition}{Definition}

\numberwithin{equation}{section}

\allowdisplaybreaks[4]

\begin{document}

\title[Inverse random source problems]{Inverse source problems for the stochastic wave equations: far-field patterns}

\author{Jianliang Li}
\address{School of Mathematics and Statistics, Yunnan University, Kunming 650091, China.}
\email{lijl@amss.ac.cn}

\author{Peijun Li}
\address{Department of Mathematics, Purdue University, West Lafayette, Indiana
47907, USA.}
\email{lipeijun@math.purdue.edu}

\author{Xu Wang}
\address{LSEC, ICMSEC, Academy of Mathematics and Systems Science, Chinese Academy of Sciences, Beijing 100190, China, and School of Mathematical Sciences, University of Chinese Academy of Sciences, Beijing 100049, China.}
\email{wangxu@lsec.cc.ac.cn}

\thanks{The first author was supported by the NNSF of China under Grant 12171057. The second author was supported in part by the NSF grant DMS-1912704.}

\subjclass[2010]{35R30, 35R60, 60H15}

\keywords{inverse source problem, stochastic wave equation, Gaussian random field, pseudo-differential operator, far-field pattern, uniqueness}

\begin{abstract}
This paper addresses the direct and inverse source problems for the stochastic acoustic, biharmonic, electromagnetic, and elastic wave equations in a unified framework. The driven source is assumed to be a centered generalized microlocally isotropic Gaussian random field, whose covariance and relation operators are classical pseudo-differential operators. Given the random source, the direct problems are shown to be well-posed in the sense of distributions and the regularity of the solutions are given. For the inverse problems, we demonstrate by ergodicity that the principal symbols of the covariance and relation operators can be uniquely determined by a single realization of the far-field pattern averaged over the frequency band with probability one.
\end{abstract}

\maketitle

\section{Introduction}

Inverse scattering problems are to determine the nature of scatterers from a knowledge of the wave field. They have played an essential role in many scientific areas such as radar and sonar, geophysical exploration, medical imaging, and nondestructive testing. These problems are challenging due to the ill-posedness and nonlinearity \cite{CK19}. In many situations, it is desirable to 
describe the scatterer as a random field in order to handle uncertainties of the surrounding environment. Compared with deterministic counterparts, stochastic scattering problems have substantially more difficulties because of two additional obstacles: the scatterer is sometimes too rough to exist point-wisely and should be understood in the sense of distributions instead; the randomness makes it meaningless and impossible to characterize the scatterer by a particular realization. As a result, the statistics, such as mean and variance, of the random scatterer are used to quantify the uncertainties of the scatterer and are of more interest in stochastic inverse scattering problems. Recently, stochastic inverse scattering problems have attracted great attention, and many new results are available for various problems, such as random medium problems \cite{LW3}, random potential problems \cite{CHL,LLW1,LLW2,LPS08, LLM2}, random impedance problems \cite{HLP}, and random surface problems \cite{BLX, FLN, HSSS}. 

As an important research subject in inverse scattering theory, the inverse random source problem has been extensively studied. When the source is modeled by an additive white noise, the mean and standard deviation of the source can be reconstructed from the statistics of the wave field \cite{BCL16,BCL18,BX,GX, LLM1}. In these approaches, the near-field scattering data needs to be measured for a fairly large number of realizations of the random source. Motivated by \cite{LPS08}, a new model is developed for the random source, which is assumed to be a real-valued generalized microlocally isotropic Gaussian (GMIG) random field with its covariance operator being a classical pseudo-differential operator. It is shown that the principal symbol of the covariance operator can be uniquely determined by the amplitude of the near-field scattering data averaged over the frequency band, generated by a single realization of the random source, see \cite{LHL,LW1} for acoustic waves, \cite{LHL,LL} for elastic waves, and \cite{LW4} for biharmonic waves. The inverse random source problem for electromagnetic waves is considered in \cite{LW2}, where the source is modeled by a complex-valued centered GMIG random field whose real and imaginary parts are assumed to be independent and identically distributed, leading to the relation operator being zero. The uniqueness result states that the high frequency limit of the variation of the electric field can uniquely determine the principal symbol matrix of the covariance operator for the random source. Moreover, by means of ergodicity in the frequency domain, the amplitude of the electric field averaged over the frequency band, obtained from a single path of the random source, can uniquely determine the diagonal entries of the principal symbol matrix. 

In this work, we intend to examine the direct and inverse source problems for the stochastic acoustic, biharmonic, electromagnetic, and elastic wave equations in a unified framework by using the far-field patterns. There are two main contributions: 

\begin{enumerate}
 \item the well-posedness of the direct problems are established for more general random sources; 
 
 \item the uniqueness of the inverse problems are obtained for both the covariance and relation operators. 
\end{enumerate}

Specifically, we consider the four commonly encountered wave equations, i.e., the Helmholtz equation, the biharmonic wave equation, Maxwell's equations, and the Navier equation. The driven source is assumed to be a complex-valued centered GMIG random field whose covariance operator and relation operator are classical pseudo-differential operators, which removes the limitation that the real and imaginary parts are independent and identically distributed. As is shown in the context, this type of sources is too rough to exist pointwisely and should be understood as distributions. Given such rough sources, the direct problems are shown to be well-posed and regularity of the solutions is also obtained. For the inverse problems, we demonstrate that the principal symbol matrices of the covariance and relation operators can be uniquely determined by the high frequency limit of the correlation of the far-field pattern. Moreover, with the aid of ergodicity of the far-field pattern in the frequency domain, the uniqueness is established for the principal symbol matrices of the covariance and relation operators with respect to the far-field pattern obtained from a single realization of the random source almost surely. 

The paper is organized as follows. In Section 2, some preliminaries are given for the high dimensional complex-valued GMIG random fields and
the fundamental solutions to the Helmholtz equation, the biharmonic wave equation, and the Navier equation.
Sections 3--6 are devoted to the direct and inverse random source problems for acoustic waves, biharmonic waves, electromagnetic waves, and elastic waves, respectively. The direct problems are examined and the uniqueness of the inverse problems is addressed. The paper concludes with some general remarks and discussions on the future work in Section 7.

\section{Preliminaries}

In this section, we introduce the $\mathbb C^n$-valued $(n\in\mathbb N)$ GMIG random fields and the fundamental solutions to the Helmholtz, biharmonic wave, and Navier equations. 

\subsection{$\mathbb C^n$-valued GMIG random fields}

Let $\mathcal O\subset\mathbb R^d$ be an open domain. Denote by $C_0^{\infty}(\mathcal O;\mathbb{F})$ the set of $\mathbb{F}$-valued smooth functions with compact supports contained in $\mathcal O$, where $\mathbb{F}$ stands for the real-valued space $\mathbb{R}$, the complex-valued space $\mathbb{C}$, or the $n$-dimensional complex-valued space $\mathbb{C}^n$. Define the space of test functions by $\mathcal{D}(\mathcal O;\mathbb{F})$, which is $C_0^{\infty}(\mathcal O;\mathbb{F})$ equipped with a locally convex topology. The dual space $\mathcal{D'}(\mathcal O;\mathbb{F})$ of  $\mathcal{D}(\mathcal O;\mathbb{F})$ is the space of distributions on $\mathcal O$ with a weak-star topology.

Denote by $W^{\gamma,q}(\mathcal O;\mathbb F)$ the $\mathbb F$-valued classical Sobolev spaces with $\gamma\in\mathbb R$ and $q\in(1,\infty)$, and by $W_0^{\gamma,q}(\mathcal O;\mathbb F)$ the closure of $C_0^\infty(\mathcal O;\mathbb F)$ in $W^{\gamma,q}(\mathcal O;\mathbb F)$ with $\gamma>0$. For simplicity, the domain $\mathbb F$ will be omitted if $\mathbb F=\mathbb C$ or $\mathbb C^n$, i.e., $W^{\gamma,q}(\mathcal O)=W^{\gamma,q}(\mathcal O;\mathbb C)$ if $\mathbb F=\mathbb C$ and $\bm W^{\gamma,q}(\mathcal O)=W^{\gamma,q}(\mathcal O;\mathbb C^n)$ if $\mathbb F=\mathbb C^n$. 

Let $(\Omega,\mathcal{F},\mathbb{P})$ be a complete probability space, where $\Omega$ is a sample space, $\mathcal{F}$ is a $\sigma$-algebra on $\Omega$, and $\mathbb{P}$ is a probability measure on $(\Omega,\mathcal{F})$. 

First we consider scalar fields when $n=1$.
A scalar field $f$ is said to be a $\mathbb C$-valued generalized Gaussian random field if $f:\Omega\to \mathcal{D'}(\mathcal O)$ is a distribution satisfying that, for each $\omega\in\Omega$, the path $f[\cdot](\omega)$ is a linear functional on $\mathcal{D}(\mathcal O)$ and for any test function $\psi\in \mathcal{D}(\mathcal O)$,
$
f[\psi]=\langle f,\psi\rangle:\Omega\to\mathbb C
$
is a $\mathbb C$-valued Gaussian random variable.

Let $\mathcal O=\mathbb R^d$. The $\mathbb C$-valued generalized Gaussian random field $f$ defined on $\mathbb R^d$ is uniquely determined by its expectation ${\mathbb E}f\in \mathcal{D'}(\mathbb R^d)$, covariance operator $\mathcal C_f:\mathcal{D}(\mathbb R^d)\to \mathcal{D'}(\mathbb R^d)$, and relation (pseudo-covariance) operator 
$\mathcal R_f:\mathcal{D}(\mathbb R^d)\to \mathcal{D'}(\mathbb R^d)$, which are defined by
\begin{eqnarray*}\label{a2}
\langle {\mathbb E}f,\psi\rangle :&=& {\mathbb E}\langle f, \psi \rangle,\\\label{a5}
\langle\mathcal C_f\varphi, \psi\rangle:&=&\mathbb E\big[\langle\overline{ f-\mathbb Ef},\varphi\rangle\langle f-\mathbb Ef,\psi\rangle\big]=\left\langle\mathbb E\big[(f-\mathbb Ef)\otimes(\overline{f-\mathbb Ef})\big],\psi\otimes\varphi\right\rangle,\\\label{a6}
\langle\mathcal R_f\varphi, \psi\rangle:&=&\mathbb E\left[\langle f-\mathbb Ef,\varphi\rangle\langle f-\mathbb Ef,\psi\rangle\right]=\left\langle\mathbb E\big[(f-\mathbb Ef)\otimes(f-\mathbb Ef)\big],\psi\otimes\varphi\right\rangle
\end{eqnarray*}
for any $\varphi,\psi\in \mathcal{D}(\mathbb R^d)$.
It is easy to note that $\mathcal C_f=\mathcal R_f$ if $f$ is $\mathbb R$-valued.

Introduce the space of symbols of order $-m$:
\begin{eqnarray*}
\mathcal{S}^{-m}(\mathbb R^d\times\mathbb R^d):=\Big\{\sigma\in C^{\infty}(\mathbb R^d\times\mathbb R^d):|\partial_{\xi}^{\gamma_1}\partial_x^{\gamma_2}\sigma(x,\xi)|\le C_{\gamma_1,\gamma_2}(1+|\xi|)^{-m-|\gamma_1|}\Big\},
\end{eqnarray*}
where $C_{\gamma_1,\gamma_2}$ is a positive constant depending on $\gamma_1$ and $\gamma_2$.

\begin{definition}\label{def1}
A $\mathbb C$-valued generalized Gaussian random field $f$ on $\mathbb R^d$ is said to be microlocally isotropic of order $-m$ in $D$ if its covariance and relation operators $\mathcal C_f$ and $\mathcal R_f$ are classical pseudo-differential operators of order $-m$, whose symbols $\sigma^c,\sigma^r\in\mathcal S^{-m}(\mathbb R^d\times\mathbb R^d)$ satisfy 
\[
\sigma^\eta(x,\xi)=a^\eta(x)|\xi|^{-m}+b^\eta(x,\xi),
\]
where $b^\eta\in\mathcal S^{-m-1}(\mathbb R^d\times\mathbb R^d)$ and $a^\eta,b^\eta(\cdot,\xi)\in C_0^\infty(D)$ for $\eta\in\{c,r\}$.
\end{definition}

Using the definition of pseudo-differential operators, we get
\begin{eqnarray}\label{a9}
\mathcal C_f\varphi(x)&=&\frac{1}{(2\pi)^d}\int_{{\mathbb R}^d}e^{{\rm i}x\cdot \xi}\sigma^c(x,\xi)\hat{\varphi}(\xi)d\xi,\\\label{a10}
\mathcal R_f\varphi(x)&=&\frac{1}{(2\pi)^d}\int_{{\mathbb R}^d}e^{{\rm i}x\cdot \xi}\sigma^r(x,\xi)\hat{\varphi}(\xi)d\xi,
\end{eqnarray}
where 
\[
\hat{\varphi}(\xi)=(\mathscr{F}\varphi)(\xi):=\int_{\mathbb R^d}e^{-{\rm i}x\cdot\xi}\varphi(x)dx
\]
denotes the Fourier transform of $\varphi$. By the Schwartz kernel theorem, there exist unique kernels $K_f^c,K_f^r\in\mathcal D'(\mathbb R^d\times\mathbb R^d)$ such that
\begin{eqnarray*}
\langle \mathcal C_f\varphi,\psi\rangle=\langle K_f^c,\psi\otimes\varphi\rangle,\quad
\langle \mathcal R_f\varphi,\psi\rangle=\langle K_f^r,\psi\otimes\varphi\rangle,
\end{eqnarray*}
which imply that
\begin{eqnarray*}
K_f^c(x,y)&=&{\mathbb E}\big[(f(x)-{\mathbb E}f(x))(\overline{f(y)-{\mathbb E}f(y)})\big],\\
K_f^r(x,y)&=&{\mathbb E}\big[(f(x)-{\mathbb E}f(x))(f(y)-{\mathbb E}f(y))\big]
\end{eqnarray*}
are distributions in $\mathcal D'(\mathbb R^d\times\mathbb R^d)$. Using \eqref{a9}--\eqref{a10}, we obtain the following bijection between the kernel $K_f^\eta$ with $\eta\in\{c,r\}$ and the symbol $\sigma^\eta$: 
\begin{eqnarray}\label{a12}
K_f^\eta(x,y)=\frac{1}{(2\pi)^d}\int_{\mathbb R^d}e^{{\rm i}(x-y)\cdot \xi}\sigma^\eta(x,\xi)d\xi={\mathscr F}^{-1}(\sigma^\eta(x,\cdot))(x-y).
\end{eqnarray}

Taking the Fourier transform on the both sides of (\ref{a12}) with respect to $x-y$ gives
\begin{eqnarray}\nonumber
&&\int_{\mathbb R^d}\left(\int_{\mathbb R^d}K_f^\eta(x,y)e^{-{\rm i}(x-y)\cdot\xi}dy\right)\varphi(x)dx=\int_{\mathbb R^d}\sigma^\eta(x,\xi)\varphi(x)dx\\\label{a14}
&&=\int_{\mathbb R^d}a^\eta(x)|\xi|^{-m}\varphi(x)dx+\int_{\mathbb R^d}b^\eta(x,\xi)\varphi(x)dx.
\end{eqnarray}

The regularity of random fields given in Definition \ref{def1} depends on the order $-m$. It has been studied in \cite{LW1} and is stated in the following lemma.

\begin{lemma}\label{lem1}
Let $f$ be a $\mathbb C$-valued GMIG random field of order $-m$ in $D$.

(i) If $m\in (d,d+2)$, then $f\in C^{0,\alpha}(D)$ almost surely for all $\alpha\in\left(0,\frac{m-d}{2}\right)$.

(ii) If $m\le d$, then $f\in W^{\frac{m-d}{2}-\epsilon, p}(D)$ almost surely for any $\epsilon>0$ and $p\in (1,\infty)$.
\end{lemma}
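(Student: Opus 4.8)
The plan is to use the Gaussianity of $f$ to reduce both statements to estimates on second-order statistics, and then to read the required decay directly from the symbol structure of Definition \ref{def1}. We may assume $f$ is centered, i.e. $\mathbb{E}f = 0$, replacing $f$ by $f - \mathbb{E}f$ otherwise. By \eqref{a12} the variance of an increment is
\[
\mathbb{E}\big|f(x)-f(y)\big|^2 = K_f^c(x,x) + K_f^c(y,y) - 2\,{\rm Re}\,K_f^c(x,y),
\]
and since $f(x)-f(y)$ is a centered complex Gaussian, all higher moments are controlled by $\mathbb{E}|f(x)-f(y)|^{2k} \le C_k\big(\mathbb{E}|f(x)-f(y)|^2\big)^k$, so it suffices to bound this single quantity built from the covariance symbol $\sigma^c$ (the relation symbol $\sigma^r$, being of the same order $-m$, enters the higher moment formulas only through a term dominated by the covariance and never worsens the estimate). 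Thus the whole argument rests on analyzing the frequency integral $(2\pi)^{-d}\int_{\mathbb{R}^d}e^{{\rm i}(x-y)\cdot\xi}\sigma^c(x,\xi)\,d\xi$.

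For part (i), where $m \in (d,d+2)$, I would first establish the sharp increment bound $\mathbb{E}|f(x)-f(y)|^2 \le C|x-y|^{m-d}$. Writing $K_f^c(x,x)-K_f^c(x,y) = (2\pi)^{-d}\int_{\mathbb{R}^d}\big(1-e^{{\rm i}(x-y)\cdot\xi}\big)\sigma^c(x,\xi)\,d\xi$ and combining $|1-e^{{\rm i}(x-y)\cdot\xi}| \le \min(2,|x-y||\xi|)$ with $|\sigma^c(x,\xi)|\lesssim(1+|\xi|)^{-m}$, I split the integral at $|\xi|\sim|x-y|^{-1}$: both the low- and high-frequency pieces contribute $|x-y|^{m-d}$, while the lower-order part $b^c$ of order $-m-1$ yields a strictly better power. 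Feeding this into the Gaussian moment bound gives $\mathbb{E}|f(x)-f(y)|^{2k}\le C_k|x-y|^{k(m-d)}$ for every $k$, and the Kolmogorov--Chentsov continuity theorem then produces a modification of $f$ that is locally Hölder continuous of any order $\alpha < \tfrac{k(m-d)-d}{2k} = \tfrac{m-d}{2}-\tfrac{d}{2k}$; letting $k\to\infty$ covers all $\alpha\in(0,\tfrac{m-d}{2})$.

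For part (ii), where $m\le d$, the field is merely a distribution, so instead of pointwise continuity I would prove membership in a Bessel-potential space by applying $(I-\Delta)^{s/2}$ with $s=\tfrac{m-d}{2}-\epsilon<0$ and showing the image lies in $L^p(D)$ almost surely. Setting $g=(I-\Delta)^{s/2}f$, its covariance operator is $(I-\Delta)^{s/2}\mathcal{C}_f(I-\Delta)^{s/2}$, which by the pseudo-differential calculus is classical of order $-m+2s=-d-2\epsilon<-d$; its symbol $\sigma_g^c$ is therefore integrable in $\xi$, so the diagonal value $K_g^c(x,x)=(2\pi)^{-d}\int_{\mathbb{R}^d}\sigma_g^c(x,\xi)\,d\xi$ is finite, supported in $D$ by the compact support of $a^\eta,b^\eta$, and bounded there. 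Gaussianity gives $\mathbb{E}|g(x)|^p\le C_p\big(K_g^c(x,x)\big)^{p/2}$, and integrating over $D$ yields $\mathbb{E}\|g\|_{L^p(D)}^p<\infty$, whence $g\in L^p(D)$ and $f\in W^{s,p}(D)$ almost surely.

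The main obstacle I anticipate is the sharp frequency-integral estimate in part (i): one must check that neither the $x$-dependence of $\sigma^c(x,\xi)$ nor the remainder $b^c$ spoils the exponent $m-d$, which requires performing the split at $|\xi|\sim|x-y|^{-1}$ uniformly in $x$ over the compact support. The parallel technical point in part (ii) is justifying the composition formula that places $(I-\Delta)^{s/2}\mathcal{C}_f(I-\Delta)^{s/2}$ in the symbol class of order $-m+2s$ and guarantees its kernel is continuous with a locally bounded diagonal; this is standard pseudo-differential calculus but must be invoked with care near $m=d$.
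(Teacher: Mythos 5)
First, note that the paper itself does not prove this lemma: it is quoted from \cite{LW1}, so your proposal can only be measured against the standard argument (which is indeed the one you outline: Gaussian moment reduction plus Kolmogorov--Chentsov for (i), and conjugation by Bessel potentials for (ii)). Your overall strategy is the right one, and your observation that the relation operator never enters the estimates is correct, since for a centered complex Gaussian $Z=X+{\rm i}Y$ one has $\mathbb E|Z|^{2k}\le C_k(\mathbb EX^2+\mathbb EY^2)^k=C_k(\mathbb E|Z|^2)^k$ regardless of the pseudo-covariance. However, there is a genuine gap in part (i): the increment bound $\mathbb E|f(x)-f(y)|^2\lesssim|x-y|^{m-d}$ does \emph{not} follow from the inequalities you invoke when $m\in(d+1,d+2)$ (and it fails by a logarithm at $m=d+1$). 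With the linear bound $|1-e^{{\rm i}(x-y)\cdot\xi}|\le\min(2,|x-y||\xi|)$, the low-frequency piece is $|x-y|\int_{|\xi|\le|x-y|^{-1}}|\xi|(1+|\xi|)^{-m}d\xi$; for $m>d+1$ this last integral converges as $|x-y|\to0$, so the piece is only $O(|x-y|)$, which is much larger than the claimed $O(|x-y|^{m-d})$ since $m-d>1$. Kolmogorov--Chentsov would then only deliver H\"older exponents $\alpha<\tfrac12$, not the full range $\alpha<\tfrac{m-d}{2}$ asserted in the lemma. The missing idea is a symmetrization that exploits quadratic (rather than linear) vanishing of the phase: since
\begin{equation*}
\mathbb E|f(x)-f(y)|^2=\frac{1}{(2\pi)^d}\int_{\mathbb R^d}\big(2-2\cos((x-y)\cdot\xi)\big)\frac{\sigma^c(x,\xi)+\sigma^c(y,\xi)}{2}d\xi
-\frac{\rm i}{(2\pi)^d}\int_{\mathbb R^d}\sin((x-y)\cdot\xi)\big(\sigma^c(x,\xi)-\sigma^c(y,\xi)\big)d\xi,
\end{equation*}
one can use $2-2\cos\theta\le\theta^2$ on the first integral (the split at $|\xi|\sim|x-y|^{-1}$ then gives $|x-y|^{m-d}$ for \emph{all} $m\in(d,d+2)$), and the mean value theorem in $x$ together with the uniform symbol estimate $|\partial_x\sigma^c(x,\xi)|\lesssim(1+|\xi|)^{-m}$ on the second, which yields $O(|x-y|^{1+\min(m-d,1)})=O(|x-y|^{m-d})$. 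Your term-by-term treatment of $K_f^c(x,x)-K_f^c(x,y)$ cannot see this cancellation.

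Part (ii) is essentially sound, but two points need repair. The claim that the diagonal of the covariance kernel of $g=(I-\Delta)^{s/2}f$ is ``supported in $D$'' is false: Bessel potentials destroy compact support. What is true, and what you actually need, is that the composed symbol lies in $\mathcal S^{-d-2\epsilon}$ with constants uniform in $x$, so $K_g^c(x,x)$ is bounded uniformly on $\mathbb R^d$; to conclude $f\in W^{\frac{m-d}2-\epsilon,p}(D)$ one then either integrates $\mathbb E|g(x)|^p$ over a bounded neighborhood of $D$, or invokes the exponential decay of the Bessel kernel away from ${\rm supp}\,f$ to get $g\in L^p(\mathbb R^d)$ almost surely. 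Second, to write $\mathbb E\|g\|_{L^p}^p=\int\mathbb E|g(x)|^pdx$ you need $g$ to be a genuine measurable random function, not merely a distribution; this holds because $\mathcal C_g$ has order $-d-2\epsilon<-d$, so $g$ admits a continuous modification by the (corrected) part (i) applied with $m$ replaced by $d+2\epsilon$. With these two repairs, and the symmetrization fix above, your argument goes through.
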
 

Now let us consider vector fields for $n>1$. A vector field ${\bm f}=(f_1,...,f_n)^{\top}$ is said to be a $\mathbb C^n$-valued GMIG random field of order $-m$ in $D$ if each component $f_j, j=1,...,n,$ is a $\mathbb C$-valued GMIG random field of the same order $-m$ in $D$. Similarly, the $\mathbb C^n$-valued generalized Gaussian random field ${\bm f}$ is uniquely determined by its expectation ${\mathbb E}{\bm f}\in \bm{\mathcal{D}}'(\mathbb{R}^d)$, covariance operator $\mathcal C_{\bm f}:\bm{\mathcal{D}}(\mathbb{R}^d)\mapsto \bm{\mathcal{D}}'(\mathbb{R}^d)$, and relation operator 
$\mathcal R_{\bm f}:\bm{\mathcal{D}}(\mathbb{R}^d)\mapsto \bm{\mathcal{D}}'(\mathbb{R}^d)$. 
The kernels $K_{\bm f}^c,K_{\bm f}^r\in\mathcal D'(\mathbb R^d\times\mathbb R^d;\mathbb C^{n\times n})$ can be formally expressed as the following distributions:
\begin{eqnarray*}\label{a18}
K_{\bm f}^c(x,y)&=&{\mathbb E}\big[({\bm f}(x)-{\mathbb E}{\bm f}(x))(\overline{{\bm f}(y)-{\mathbb E}{\bm f}(y)})^\top\big],\\\label{a19}
K_{\bm f}^r(x,y)&=&{\mathbb E}\big[({\bm f}(x)-{\mathbb E}{\bm f}(x))({\bm f}(y)-{\mathbb E}{\bm f}(y))^{\top}\big].
\end{eqnarray*}

If $\bm f$ is microlocally isotropic of order $-m$, then there exist symbols $\Sigma^r,\Sigma^c\in\mathcal S^{-m}(\mathbb R^d\times\mathbb R^d;\mathbb C^{n\times n})$ of the form
\[
\Sigma^\eta(x,\xi)=A^\eta(x)|\xi|^{-m}+B^\eta(x,\xi)
\]
with $B^\eta\in\mathcal S^{-m-1}(\mathbb R^d\times\mathbb R^d;\mathbb C^{n\times n})$ and $A^\eta,B^\eta(\cdot,\xi)\in C_0^{\infty}(D;\mathbb C^{n\times n})$
such that
\begin{eqnarray*}
\mathcal C_{\bm f}{\bm\varphi}(x)=\frac{1}{(2\pi)^d}\int_{{\mathbb R}^d}e^{{\rm i}x\cdot \xi}\Sigma^c(x,\xi)\widehat{\bm\varphi}(\xi)d\xi,\quad
\mathcal R_{\bm f}{\bm\varphi}(x)=\frac{1}{(2\pi)^d}\int_{{\mathbb R}^d}e^{{\rm i}x\cdot \xi}\Sigma^r(x,\xi)\widehat{\bm\varphi}(\xi)d\xi
\end{eqnarray*}
for any $\bm\varphi\in\bm{\mathcal D}(\mathbb R^d)$,
and 
\[
K_{\bm f}^\eta(x,y)=\frac1{(2\pi)^d}\int_{\mathbb R^d}e^{{\rm i}(x-y)\cdot\xi}\Sigma^\eta(x,\xi)d\xi=\mathscr F^{-1}(\Sigma^\eta(x,\cdot))(x-y),
\]
where $\eta\in\{c,r\}$. It can also be verified that the kernel $K_{\bm f}^\eta$ satisfies
\begin{eqnarray}\label{a22}
&&\int_{\mathbb R^d}\left(\int_{\mathbb R^d}K_{\bm f}^{\eta}(x,y)e^{-{\rm i}(x-y)\cdot\xi}dy\right){\bm\varphi}(x)dx=\int_{\mathbb R^d}\Sigma^\eta(x,\xi){\bm\varphi}(x)dx\notag\\
&&=\int_{\mathbb R^d}A^{\eta}(x)|\xi|^{-m}{\bm\varphi}(x)dx+\int_{\mathbb R^d}B^{\eta}(x,\xi){\bm\varphi}(x)dx.
\end{eqnarray}

\subsection{The fundamental solutions}

In this subsection, we introduce the fundamental solutions and their asymptotic behaviors of large arguments for the wave equations considered in this work. They play an important role in the analysis. 
 
The fundamental solution of the Helmholtz equation in $\mathbb R^d$ is given by 
\begin{equation*}%\label{a24}
	\Phi_d(x,y,\kappa)=\left\{\begin{aligned}\frac{\rm i}{4}H_0^{(1)}(\kappa |x-y|), &&d=2,\\
	\frac{e^{{\rm i}\kappa |x-y|}}{4\pi |x-y|}, &&d=3,\end{aligned}\right.
\end{equation*}
where $\kappa>0$ is the wave number and $H_0^{(1)}$ is the Hankel function of the first kind with order zero. 
Let $\hat{x}:=x/|x|\in \mathbb S^{d-1}$.
Noting (cf. \cite[Theorem 2.6]{CK19})
\[
|x-y|=\sqrt{|x|^2-2|x|\hat{x}\cdot y+|y|^2}=|x|-\hat{x}\cdot y+O\left(|x|^{-1}\right),\quad|x|\to\infty
\]
and (cf. \cite[$(3.105)$]{CK19})
\[
H_0^{(1)}(z)=e^{-{\rm i}\frac{\pi}4}\sqrt{\frac2{\pi z}}e^{{\rm i}z}\left(1+O(|z|^{-1})\right),\quad|z|\to\infty,
\]
we have
\[
\frac{e^{{\rm i}\kappa|x-y|}}{|x-y|}=\frac{e^{{\rm i}\kappa|x|}}{|x|}\left(e^{-{\rm i}\kappa\hat{x}\cdot y}+O\left(|x|^{-1}\right)\right),\quad |x|\to\infty
\]
and
\[
H_0^{(1)}(\kappa|x-y|)=(-{\rm i})e^{{\rm i}\frac{\pi}4}\sqrt{\frac2{\pi\kappa|x|}}e^{{\rm i}\kappa|x|}\left(e^{-{\rm i}\kappa\hat{x}\cdot y}+O\left(|x|^{-1}\right)\right),\quad|x|\to\infty,
\]
which imply  
\begin{eqnarray}\label{a25}
\Phi_d(x,y,\kappa)=\frac{e^{{\rm i}\kappa |x|}}{|x|^{\frac{d-1}{2}}}\left(C_d\kappa^{\frac{d-3}2}e^{-{\rm i}\kappa\hat{x}\cdot y}+O\left(|x|^{-1}\right)\right),\quad |x|\to\infty,
\end{eqnarray}
where 
\begin{eqnarray}\label{a26}
C_d=\left\{\begin{aligned}\frac{e^{{\rm i}\frac{\pi}{4}}}{\sqrt{8\pi}}, &&d=2,\\
\frac{1}{4\pi}, &&d=3.\end{aligned}\right.
\end{eqnarray}

The fundamental solution of the biharmonic wave equation is (cf. \cite{TH17, TS18, LW4})
\begin{eqnarray*}\label{a27}
F_d(x,y,\kappa)=\frac{1}{2\kappa^2}\left[\Phi_d(x,y,\kappa)-\Phi_d(x,y,{\rm i}\kappa)\right].
\end{eqnarray*}
It follows from \eqref{a25} that we have  
\begin{eqnarray}\label{a29}
F_d(x,y,\kappa)=\frac{e^{{\rm i}\kappa |x|}}{|x|^{\frac{d-1}{2}}}\left(\frac{C_d}2\kappa^{\frac{d-7}2}e^{-{\rm i}\kappa\hat{x}\cdot y}+O\left(|x|^{-1}\right)\right),\quad |x|\to\infty.
\end{eqnarray}

For the elastic wave equation, it is strongly elliptic if the Lam\'e parameters $\lambda$ and $\mu$ satisfy $\mu>0$ and $\lambda+2\mu>0$ (cf. \cite[Section 10.4]{M}). Its Green tensor is given by 
\begin{eqnarray*}%\label{a30}
\bm{G}_d(x,y,\omega)=\frac{1}{\mu}\Phi_d(x,y,\kappa_{\rm
s})\bm{I}+\frac{1}{\omega^2}\nabla_x\nabla_x^\top\Big[\Phi_d(x,y,
\kappa_{\rm s})-\Phi_d(x,y,\kappa_{\rm p})\Big],
\end{eqnarray*}
where $\omega>0$ is the angular frequency,  $\kappa_{p}:=c_p\omega$ and $\kappa_s:=c_s\omega$ with $c_p=(\lambda+2\mu)^{-\frac{1}{2}}$ and $c_s=\mu^{-\frac{1}{2}}$ denote the compressional and shear wave numbers, respectively, and ${\bm I}$ is the $d\times d$ identity matrix. It is shown in \cite[$(27)-(28)$]{VS05} for $d=2$ and in \cite[$(2.2)$]{CS15} for $d=3$ that $\bm G_d$ has the following asymptotic behavior: 
\begin{eqnarray}\label{a31}
\bm{G}_d(x,y,\omega)&=&\frac{e^{{\rm i}\kappa_{\rm p}|x|}}{|x|^{\frac{d-1}{2}}}C_dc_p^{\frac{d+1}2}\omega^{\frac{d-3}2}\hat{x}\hat{x}^\top e^{-{\rm i}\kappa_{\rm p}\hat{x}\cdot y}\notag\\
&&+\frac{e^{{\rm i}\kappa_{\rm s}|x|}}{|x|^{\frac{d-1}{2}}}C_dc_s^{\frac{d+1}2}\omega^{\frac{d-3}2}(\bm{I}-\hat{x}\hat{x}^\top )e^{-{\rm i}\kappa_{\rm s}\hat{x}\cdot y}+O(|x|^{-\frac{d+1}{2}}),\quad|x|\to\infty,
\end{eqnarray}
where the constant $C_d$ is given in \eqref{a26}.

\section{Acoustic waves}
\label{sec:acoustic}

In this section, we investigate the direct and inverse random source problems for the Helmholtz equation. 

\subsection{The direct problem}

Consider the stochastic Helmholtz equation 
\begin{eqnarray}\label{b1}
\Delta u+\kappa^2u=f\quad{\rm in}~ {\mathbb R}^d. 
\end{eqnarray} 
The wave field $u$ is required to satisfy the Sommerfeld radiation condition
\begin{eqnarray}\label{b2}
\lim_{|x|\to\infty}|x|^{\frac{d-1}{2}}\left(\partial_{|x|} u-{\rm i}\kappa u\right)=0.
\end{eqnarray}
The random source $f$ satisfies the following assumption.

\begin{assumption}\label{as1}
The source $f$ is assumed to be a $\mathbb C$-valued centered GMIG random field of order $-m$ in a bounded domain $D\subset\mathbb R^d$. The principal symbols of its covariance and relation operators have the forms 
$a^c(x)|\xi|^{-m}$ and $a^r(x)|\xi|^{-m}$, respectively, where $a^c, a^r\in C_0^\infty(D)$.
\end{assumption}

The problem (\ref{b1})--(\ref{b2}) was studied in \cite{LHL,LW3}, where $f$ was assumed to be a $\mathbb R$-valued centered GMIG random field of order $-m$ with $m\in(d-1,d]$. When $f$ is $\mathbb C$-valued with its covariance and relation operators being of the same order $-m$, its regularity is the same as the $\mathbb R$-valued case. The well-posedness of \eqref{b1}--\eqref{b2} may be obtained directly based on the results in \cite{LHL,LW3}, but the parameters are not optimal. The following result presents the well-posedness of \eqref{b1}--\eqref{b2}, the parameters are different from the existing results and allow more general and rougher sources. 

\begin{theorem}\label{thm1}
Let $f$ satisfy Assumption \ref{as1} with $m\in (d-4,d]$.
The problem (\ref{b1})--(\ref{b2}) is well-posed in the sense of distributions with a unique solution given by 
\begin{eqnarray}\label{b3}
u(x,\kappa)=-\int_{\mathbb R^d}\Phi_d(x,y,\kappa)f(y)dy,\quad x\in\mathbb R^d,
\end{eqnarray}
where  $u\in W_{\rm loc}^{\gamma,q}({\mathbb R}^d)$ almost surely for any $q>1$ and $$0<\gamma<\min\left\{\frac{4-d+m}2,\frac{4-d+m}2+\left(\frac1q-\frac12\right)d\right\}.$$
\end{theorem}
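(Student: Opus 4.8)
The plan is to proceed in three stages: first establish existence and uniqueness, then derive the sharp Hilbertian ($L^2$-based) Sobolev regularity of the solution, and finally pass to general integrability exponents by embedding.

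For existence, I would verify directly that the volume potential \eqref{b3} solves \eqref{b1} in the sense of distributions. Since $\Phi_d(\cdot,y,\kappa)$ is the outgoing fundamental solution of $\Delta+\kappa^2$, for any test function the distributional pairing $\langle(\Delta+\kappa^2)u,\psi\rangle$ reduces to $-\langle f,\psi\rangle$; the only point requiring care is that $f$ is a rough, compactly supported distribution (of negative order when $m<d$, by Lemma \ref{lem1}(ii)), so the pairing $\langle f,\Phi_d(x,\cdot,\kappa)\rangle$ must be justified using the mild, integrable singularity of $\Phi_d$ on the diagonal. The radiation condition \eqref{b2} follows from the far-field asymptotics \eqref{a25}. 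Uniqueness is the standard one: the difference of two solutions is an outgoing solution of the homogeneous Helmholtz equation, which vanishes by Rellich's lemma together with unique continuation.

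For the regularity, the key observation is that $u$ is a complex Gaussian field, being a continuous linear image of $f$, so its sample-path regularity is governed by a single second moment. I would fix a cutoff $\chi\in C_0^\infty$ and estimate $\mathbb E\|\chi u\|_{H^s}^2=\int_{\mathbb R^d}(1+|\xi|^2)^s\,\mathbb E|\widehat{\chi u}(\xi)|^2\,d\xi$. Using the representation \eqref{b3} together with the fact that the covariance operator $\mathcal C_f$ has principal symbol $a^c(x)|\xi|^{-m}$ (Assumption \ref{as1}), the high-frequency content of $f$ behaves like $|\xi|^{-m}$, while convolution with $\Phi_d$ supplies the Fourier multiplier $(|\xi|^2-\kappa^2)^{-1}\sim|\xi|^{-2}$, i.e. a smoothing of order two. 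Consequently $\mathbb E|\widehat{\chi u}(\xi)|^2\lesssim|\xi|^{-m-4}$ for large $|\xi|$, and the spectral integral converges precisely when $2s-m-4<-d$, that is $s<\frac{4-d+m}{2}$. Equivalently and more economically, one may feed the regularity $f\in H^{(m-d)/2-\epsilon}_{\mathrm{loc}}$ from Lemma \ref{lem1}(ii) (the case $p=2$) into the interior elliptic regularity for the second-order elliptic operator $\Delta+\kappa^2$, which again yields $u\in H^{(4-d+m)/2-\epsilon}_{\mathrm{loc}}$ almost surely.

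Once the Hilbertian bound $u\in W^{\gamma,2}_{\mathrm{loc}}(\mathbb R^d)$ for every $\gamma<\frac{4-d+m}{2}$ is in hand almost surely, the general statement follows from deterministic Sobolev theory applied pathwise on a bounded ball $B$. For $q\le 2$ the inclusion $L^2(B)\hookrightarrow L^q(B)$ gives $u\in W^{\gamma,q}(B)$ with the same range $\gamma<\frac{4-d+m}{2}$; for $q>2$ the Sobolev embedding $W^{\gamma',2}(B)\hookrightarrow W^{\gamma,q}(B)$, valid when $\gamma\le\gamma'-d(\tfrac12-\tfrac1q)$, produces the shifted range $\gamma<\frac{4-d+m}{2}+\big(\tfrac1q-\tfrac12\big)d$. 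Combining the two cases yields exactly the minimum appearing in the statement. The main obstacle I anticipate is the sharp second-moment estimate at the threshold: one must quantify precisely how the order $-2$ smoothing of $\Phi_d$ couples with the order $-m$ covariance of $f$, controlling both the diagonal singularity of the fundamental solution and the resonance of the multiplier $(|\xi|^2-\kappa^2)^{-1}$ near $|\xi|=\kappa$ (the latter affecting only a bounded frequency region, hence irrelevant to regularity but still requiring a clean argument). A secondary technical point is to make the representation formula and the distributional identity rigorous for the rough source, for which the compact support of $f$ and the integrability of $\Phi_d$ near the diagonal are essential.
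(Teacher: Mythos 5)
Your proposal is correct, and your ``more economical'' second route is in essence the paper's own proof: the paper also argues pathwise, using Lemma \ref{lem1} to place $f$ in a Hilbertian space $H^{-s_1}(D)$ with $s_1\in\left(\frac{d-m}{2},2\right)$, then proving that the volume potential $\mathcal H_\kappa$ is bounded from $H^{-s_1}(D)$ into $H^{s_2}(G)$ whenever $s_1+s_2\in(0,2]$ --- this mapping property, established by following \cite[Lemma 3.1]{LW3} via an auxiliary Hilbert-space structure on H\"older spaces, is exactly the order-two gain you invoke as interior elliptic regularity through a parametrix --- and finally applying the embedding $H^{s_2}(G)\hookrightarrow W^{\gamma,q}(G)$ under the conditions $\gamma<s_2$ and $\frac1q>\frac12-\frac{s_2-\gamma}{d}$, which is precisely your $q\le2$ / $q>2$ dichotomy written as a single formula. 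Your first route, estimating $\mathbb E\|\chi u\|_{H^s}^2$ on the Fourier side, is genuinely different from anything in the paper and would, if completed, yield the regularity without ever using pathwise regularity of $f$; but it is also the one place where your sketch is incomplete: the outgoing resolvent corresponds to the multiplier $-\left(|\xi|^2-\kappa^2-{\rm i}0\right)^{-1}$, and since $(|\xi|^2-\kappa^2)^{-2}$ is not locally integrable across the sphere $|\xi|=\kappa$, the bound $\mathbb E|\widehat{\chi u}(\xi)|^2\lesssim|\xi|^{-m-4}$ cannot be obtained by treating the resolvent as a pointwise multiplier; one must exploit the smearing by $\hat\chi$ (or a limiting-absorption argument), which you flag as needing ``a clean argument'' but do not carry out. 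Since your pathwise route stands on its own, this does not constitute a gap in the proposal. Finally, your explicit treatment of existence (distributional verification of the volume potential, using the compact support of $f$ and local integrability of $\Phi_d$) and of uniqueness (Rellich's lemma plus unique continuation) covers ground that the paper's proof leaves implicit, and is standard and sound.
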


\begin{proof}
By Assumption \ref{as1}, we have $f\in W^{\frac{m-d}2-\epsilon,p}(D)$ for any $\epsilon>0$ and $p>1$ according to Lemma \ref{lem1}. It follows from the Kondrachov embedding theorem that
\[
W^{\frac{m-d}2-\epsilon,p}(D)\hookrightarrow H^{-s_1}(D)
\]
is continuous for any $p\ge2$ and $s_1\in(\frac{d-m}2,2)$. 

Let $G\subset\mathbb R^d$ be a bounded domain with a locally Lipschitz boundary. Define the volume potential operator $\mathcal H_\kappa$ by
\[
(\mathcal H_\kappa f)(x):=-\int_{\mathbb R^d}\Phi_d(x,y,\kappa)f(y)dy.
\]
Following the same procedure used in \cite[Lemma 3.1]{LW3} yields that $\mathcal H_\kappa:H^{-s_1}(D)\to H^{s_2}(G)$ is bounded for any $s_1,s_2>0$ satisfying $s:=s_1+s_2\in(0,2]$. More precisely, we consider spaces $C^{0,\alpha}(D)$ and $C^{2,\alpha}(G)$ with $\alpha\in(0,1)$ equipped with scalar products
\[
(f_1,f_2)_{C^{0,\alpha}(D)}:=(\tilde f_1,\tilde f_2)_{H^{s_2-2}(\mathbb R^d)}\quad\forall~f_1,f_2\in C^{0,\alpha}(D)
\]
and
\[
(g_1,g_2)_{C^{2,\alpha}(G)}:=(\tilde g_1,\tilde g_2)_{H^{s_2}(\mathbb R^d)}\quad \forall~g_1,g_2\in C^{2,\alpha}(G),
\]
respectively. Here, $\tilde f_i$ and $\tilde g_i$, $i=1,2$, denote the zero extensions of $f_i$ and $g_i$ outside $D$ and $G$, respectively. We then obtain 
\[
\|\mathcal H_\kappa f\|_{H^{s_2}(G)}=\|\mathcal H_\kappa f\|_{C^{2,\alpha}(G)}\lesssim\|f\|_{C^{0,\alpha}(D)}=\|f\|_{H^{s_2-2}(D)}\le\|f\|_{H^{-s_1}(D)}. 
\]
Since $s_1\in(\frac{d-m}2,2)$, it holds $0<s_2\le2-s_1<\frac{4-d+m}2$. Choose $s_2=\frac{4-d+m}2-\epsilon$. Then for $\gamma$ and $q$ satisfying the assumptions in the theorem, there must exist some $\epsilon>0$ such that $\gamma<s_2$ and $\frac1q>\frac12-\frac{s_2-\gamma}d$, and hence the embedding
\[
H^{s_2}(G)\hookrightarrow W^{\gamma,q}(G)
\]
is continuous, which completes the proof.
\end{proof}

%\begin{remark}
%The solution $u$ given in Theorem \ref{thm1} is a distributional solution to the problem \eqref{b1}--\eqref{b2} meaning that $u$ %satisfies \eqref{b1} when derivatives are understood as distributional derivatives. More precisely,
%for any test function $\psi\in\mathcal D(\mathbb R^d)$, the solution $u$ in \eqref{b3} satisfies
%\begin{align*}
%\langle\Delta u+\kappa^2u,\psi\rangle:=\langle u,(\Delta+\kappa^2)\psi\rangle=\langle f,\psi\rangle.
%\end{align*}
%\end{remark}

\subsection{The inverse problem}

The inverse source problem aims to recover the principal symbols $a^c$ and $a^r$ of the covariance and relation operators, respectively, from the far-field pattern of the wave field. Combining \eqref{a25} and \eqref{b3} gives
\begin{eqnarray*}\label{b5}
u(x,\kappa)=\frac{e^{{\rm i}\kappa |x|}}{|x|^{\frac{d-1}{2}}}\left(u^{\infty}(\hat{x},\kappa)+O(|x|^{-1})\right),\quad |x|\to\infty,
\end{eqnarray*}
where $u^{\infty}$ is known as the far-field pattern and is given by 
\begin{eqnarray}\label{b6}
u^{\infty}(\hat{x},\kappa)=-C_d\kappa^{\frac{d-3}2}\int_{\mathbb R^d} e^{-{\rm i}\kappa\hat{x}\cdot y}f(y)dy.
\end{eqnarray}

First, we show that the Fourier modes of $a^c$ and $a^r$ can be determined by the expectation of the high frequency limit of the far-field pattern, which is stated in the following lemma.

\begin{lemma}\label{lm2}
Let $f$ satisfy Assumption \ref{as1} with $m\in(d-4,d]$. For any $\tau\geq 0$, it holds
\begin{eqnarray}\label{b7}
\lim_{\kappa\to\infty}\kappa^{m+3-d}{\mathbb E}\left[u^{\infty}(\hat{x},\kappa+\tau)\overline{u^{\infty}(\hat{x},\kappa)}\right]&=&|C_d|^2\widehat{a^c}(\tau\hat{x}),\\\label{b8}
\lim_{\kappa\to\infty}\kappa^{m+3-d}{\mathbb E}\left[u^{\infty}(\hat{x},\kappa+\tau)u^{\infty}(-\hat{x},\kappa)\right]&=&C_d^2\widehat{a^r}(\tau\hat{x}).
\end{eqnarray}
\end{lemma}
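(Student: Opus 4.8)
The plan is to realize the far-field pattern as a distributional pairing of the random source against a plane wave, convert the expectation of the product into the action of the covariance (resp. relation) operator, and then extract the leading asymptotics by pushing the exponential through the pseudo-differential representation \eqref{a9} and reading off the principal symbol. Writing $\phi_s(y):=e^{-{\rm i}s\hat x\cdot y}$, formula \eqref{b6} becomes $u^\infty(\hat x,\kappa)=-C_d\kappa^{\frac{d-3}2}\langle f,\phi_\kappa\rangle$. This pairing is legitimate because Assumption \ref{as1} forces the symbols $\sigma^c(\cdot,\xi),\sigma^r(\cdot,\xi)$, and hence the kernels $K_f^c,K_f^r$, to vanish for $x\notin D$; testing against functions supported outside $\overline D$ gives zero variance, so $f$ is almost surely supported in $\overline D$ and may be paired with $\phi_\kappa$ after inserting a cutoff equal to $1$ on $\overline D$.

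Using $\overline{\langle f,\phi_\kappa\rangle}=\langle\overline f,\overline{\phi_\kappa}\rangle$ and the fact that $\overline{\phi_\kappa}=\phi_{-\kappa}$, I would invoke the definitions of $\mathcal C_f$ and $\mathcal R_f$ with $\varphi=\phi_{-\kappa}$, $\psi=\phi_{\kappa+\tau}$ (matching the centered formulas $\mathbb E[\langle\overline f,\varphi\rangle\langle f,\psi\rangle]=\langle\mathcal C_f\varphi,\psi\rangle$ and $\mathbb E[\langle f,\varphi\rangle\langle f,\psi\rangle]=\langle\mathcal R_f\varphi,\psi\rangle$) to obtain
\[
\mathbb E\big[u^\infty(\hat x,\kappa+\tau)\overline{u^\infty(\hat x,\kappa)}\big]=|C_d|^2(\kappa+\tau)^{\frac{d-3}2}\kappa^{\frac{d-3}2}\langle\mathcal C_f\phi_{-\kappa},\phi_{\kappa+\tau}\rangle,
\]
with the analogous identity for \eqref{b8} where $\mathcal C_f$ and $|C_d|^2$ are replaced by $\mathcal R_f$ and $C_d^2$ (the second factor being $u^\infty(-\hat x,\kappa)=-C_d\kappa^{\frac{d-3}2}\langle f,\phi_{-\kappa}\rangle$). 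Since $\widehat{\phi_{-\kappa}}(\xi)=(2\pi)^d\delta(\xi-\kappa\hat x)$, the representation \eqref{a9} collapses to $\mathcal C_f\phi_{-\kappa}(x)=e^{{\rm i}\kappa\hat x\cdot x}\sigma^c(x,\kappa\hat x)$, whence
\[
\langle\mathcal C_f\phi_{-\kappa},\phi_{\kappa+\tau}\rangle=\int_{\mathbb R^d}e^{-{\rm i}\tau\hat x\cdot x}\sigma^c(x,\kappa\hat x)\,dx.
\]
Rigorously this equality is just \eqref{a14} evaluated at $\xi=\kappa\hat x$ against $\varphi(x)=e^{-{\rm i}\tau\hat x\cdot x}$, the non-compactly-supported $\varphi$ being admissible since $\sigma^c(\cdot,\xi)$ is supported in $D$.

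Inserting the principal-symbol decomposition $\sigma^c(x,\kappa\hat x)=a^c(x)\kappa^{-m}+b^c(x,\kappa\hat x)$ splits the integral into $\kappa^{-m}\widehat{a^c}(\tau\hat x)$ plus a remainder $R_\kappa:=\int_D e^{-{\rm i}\tau\hat x\cdot x}b^c(x,\kappa\hat x)\,dx$. Multiplying by $\kappa^{m+3-d}$, the principal contribution carries the scalar factor $\kappa^{\frac{3-d}2}(\kappa+\tau)^{\frac{d-3}2}=(1+\tau/\kappa)^{\frac{d-3}2}\to1$, yielding exactly $|C_d|^2\widehat{a^c}(\tau\hat x)$. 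For the remainder, the estimate $b^c\in\mathcal S^{-m-1}$ gives $|b^c(x,\kappa\hat x)|\le C(1+\kappa)^{-m-1}$; integrating over the bounded set $D$ yields $|R_\kappa|\lesssim\kappa^{-m-1}$, and since the prefactor $\kappa^{m+3-d}(\kappa+\tau)^{\frac{d-3}2}\kappa^{\frac{d-3}2}=O(\kappa^m)$, the remainder is $O(\kappa^{-1})\to0$. This proves \eqref{b7}, and \eqref{b8} follows verbatim with $a^c,|C_d|^2$ replaced by $a^r,C_d^2$.

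The main obstacle is the rigorous treatment of the distributional identities against the non-compactly-supported exponentials: legitimizing the Dirac-mass computation (equivalently, extending \eqref{a14} to $\varphi=e^{-{\rm i}\tau\hat x\cdot x}$) and justifying the interchange of $\mathbb E$ with the spatial integrations. Both are handled by exploiting that Assumption \ref{as1} confines $a^\eta,b^\eta(\cdot,\xi)$ and the kernels $K_f^\eta$ to the bounded domain $D$, so that a cutoff reduces every pairing to one against genuine test functions and Fubini applies on the compact set $D\times D$; the remaining steps are elementary power-counting in $\kappa$.
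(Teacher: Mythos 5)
Your proposal is correct and follows essentially the same route as the paper: both reduce $\mathbb E\big[u^{\infty}(\hat{x},\kappa+\tau)\overline{u^{\infty}(\hat{x},\kappa)}\big]$ (and its relation-operator analogue) to $|C_d|^2(\kappa+\tau)^{\frac{d-3}{2}}\kappa^{\frac{d-3}{2}}\int_D e^{-{\rm i}\tau\hat{x}\cdot y}\sigma^{c}(y,\kappa\hat{x})\,dy$ via the kernel--symbol identity \eqref{a14}, then split off the principal symbol $a^{c}(y)\kappa^{-m}$ and bound the remainder by $|b^{c}(y,\kappa\hat{x})|\lesssim\kappa^{-m-1}$ with the same power counting in $\kappa$. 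Your plane-wave/Dirac-delta framing of the covariance and relation operators is only a cosmetic repackaging of the paper's double-integral computation with $K_f^{c}$ and $K_f^{r}$, and you correctly acknowledge it collapses to \eqref{a14}.
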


\begin{proof}
It follows from \eqref{b6} that 
\begin{eqnarray}\label{eq:lm2}
&&{\mathbb E}\left[u^{\infty}(\hat{x},\kappa+\tau)\overline{u^{\infty}(\hat{x},\kappa)}\right]\notag\\
&=&|C_d|^2(\kappa+\tau)^{\frac{d-3}2}\kappa^{\frac{d-3}2}\int_{\mathbb R^d}\int_{\mathbb R^d} e^{-{\rm i}(\kappa+\tau)\hat{x}\cdot y}e^{{\rm i}\kappa\hat{x}\cdot z}{\mathbb E}\left[f(y)\overline{f(z)}\right]dydz\notag\\
&=&|C_d|^2(\kappa+\tau)^{\frac{d-3}2}\kappa^{\frac{d-3}2}\int_{\mathbb R^d}\left[\int_{\mathbb R^d}K_f^c(y,z)e^{-{\rm i}\kappa\hat{x}\cdot (y-z)}dz\right]e^{-{\rm i}\tau\hat{x}\cdot y}dy\notag\\
& =&|C_d|^2(\kappa+\tau)^{\frac{d-3}2}\kappa^{\frac{d-3}2}\left[\int_{\mathbb R^d}a^c(y)e^{-{\rm i}\tau\hat{x}\cdot y}dy|\kappa\hat{x}|^{-m}+\int_{\mathbb R^d}b^c(y,\kappa\hat{x})e^{-{\rm i}\tau\hat{x}\cdot y}dy\right]\notag\\
& =&|C_d|^2\left(\frac{\kappa}{\kappa+\tau}\right)^{\frac{3-d}2}\kappa^{d-3-m}\widehat{a^c}(\tau\hat{x})+O(\kappa^{d-4-m}),
\end{eqnarray}
where we used the relationship between the kernel $K_f^c$ and the symbol $a^c$ given in \eqref{a14}, and the facts that the residual $b^c\in\mathcal S^{-m-1}(\mathbb R^d\times\mathbb R^d)$ satisfies
$|b^c(y,\kappa\hat x)|\lesssim\kappa^{-m-1}$ 
as $\kappa\to\infty$ and $b^c(\cdot,\xi)\in C_0^\infty(D)$ for any $\xi\in\mathbb R^d$. 
Multiplying both sides of \eqref{eq:lm2} by $\kappa^{m+3-d}$ and taking the limit as $\kappa\to\infty$, we obtain 
\begin{align*}
\lim_{\kappa\to\infty}\kappa^{m+3-d}{\mathbb E}\left[u^{\infty}(\hat{x},\kappa+\tau)\overline{u^{\infty}(\hat{x},\kappa)}\right]d\kappa=&~|C_d|^2\widehat{a^c}(\tau\hat x)\lim_{\kappa\to\infty}\left(\frac{\kappa}{\kappa+\tau}\right)^{\frac{3-d}2}\\
=&~|C_d|^2\widehat{a^c}(\tau\hat x),
\end{align*}
which completes the proof of \eqref{b7}.

Similarly, we may show \eqref{b8} by taking the high frequency limit of the data
\begin{eqnarray*}
&&{\mathbb E}\left[u^{\infty}(\hat{x},\kappa+\tau)u^{\infty}(-\hat{x},\kappa)\right]\\
&=&C_d^2(\kappa+\tau)^{\frac{d-3}2}\kappa^{\frac{d-3}2}\int_{\mathbb R^d}\int_{\mathbb R^d} e^{-{\rm i}(\kappa+\tau)\hat{x}\cdot y}e^{{\rm i}\kappa\hat{x}\cdot z}{\mathbb E}\left[f(y)f(z)\right]dydz\\
&=&C_d^2(\kappa+\tau)^{\frac{d-3}2}\kappa^{\frac{d-3}2}\int_{\mathbb R^d}\left[\int_{\mathbb R^d}K_f^r(y,z)e^{-{\rm i}\kappa\hat{x}\cdot (y-z)}dz\right]e^{-{\rm i}\tau\hat{x}\cdot y}dy\\
& =&C_d^2(\kappa+\tau)^{\frac{d-3}2}\kappa^{\frac{d-3}2}\left[\int_{\mathbb R^d}a^r(y)e^{-{\rm i}\tau\hat{x}\cdot y}dy|\kappa\hat{x}|^{-m}+\int_{\mathbb R^d}b^r(y,\kappa\hat{x})e^{-{\rm i}\tau\hat{x}\cdot y}dy\right]\\
& =&C_d^2\left(\frac{\kappa}{\kappa+\tau}\right)^{\frac{3-d}2}\kappa^{d-3-m}\widehat{a^r}(\tau\hat{x})+O(\kappa^{d-4-m}),
\end{eqnarray*}
where the residual $b^r(\cdot,\xi)\in C_0^\infty(D)$ is uniformly bounded by $|\xi|^{-m-1}$ as $|\xi|\to\infty$. 
\end{proof}

The results in Lemma \ref{lm2} imply that $a^c$ and $a^r$ can be uniquely determined by the expectation of high frequency limit of the far-field pattern. This kind of data requires the measurements at all sample paths of the random source. Next we show that $a^c$ and $a^r$ can also be uniquely determined by the far-field pattern averaged over the frequency band at a single sample path almost surely. 

The following results present some a priori estimates of the far-field pattern, which are used to show the analogue of ergodicity in the frequency domain. 

\begin{lemma}\label{lem2}
Let $f$ satisfy Assumption \ref{as1} with $m\in (d-4, d]$. For any $\hat{x}\in\mathbb S^{d-1}$, $\kappa_1,\kappa_2\geq 1$ and any fixed $N\in\mathbb N$, the following estimates hold: 
\begin{eqnarray}\label{b16}
\left|{\mathbb E}\left[u^{\infty}(\hat{x},\kappa_1)\overline{u^{\infty}(\hat{x},\kappa_2)}\right]\right|&\lesssim&\kappa_1^{\frac{d-3}{2}}\kappa_2^{\frac{d-3}{2}-m}(1+|\kappa_1-\kappa_2|)^{-N},\\\label{b17}
\left|{\mathbb E}\left[u^{\infty}(\hat{x},\kappa_1)u^{\infty}(-\hat{x},\kappa_2)\right]\right|&\lesssim&\kappa_1^{\frac{d-3}{2}}\kappa_2^{\frac{d-3}{2}-m}(1+|\kappa_1-\kappa_2|)^{-N},\\\label{b18}
\left|{\mathbb E}\left[u^{\infty}(\hat{x},\kappa_1)\overline{u^{\infty}(-\hat{x},\kappa_2)}\right]\right|&\lesssim&\kappa_1^{\frac{d-3}{2}}\kappa_2^{\frac{d-3}{2}-m}(1+\kappa_1+\kappa_2)^{-N},\\\label{b19}
\left|{\mathbb E}\left[u^{\infty}(\hat{x},\kappa_1)u^{\infty}(\hat{x},\kappa_2)\right]\right|&\lesssim&\kappa_1^{\frac{d-3}{2}}\kappa_2^{\frac{d-3}{2}-m}(1+\kappa_1+\kappa_2)^{-N}.
\end{eqnarray}
\end{lemma}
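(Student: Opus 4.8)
The plan is to reduce each of the four expectations to a double integral against the appropriate second-order statistic of $f$ and then read off the decay from the symbol representation \eqref{a14}. Writing the far-field pattern as in \eqref{b6}, each product $\mathbb E[u^\infty(\pm\hat x,\kappa_1)\,(\cdot)\,u^\infty(\pm\hat x,\kappa_2)]$ becomes a constant multiple of $\kappa_1^{\frac{d-3}2}\kappa_2^{\frac{d-3}2}$ times a double integral $\iint e^{-{\rm i}\xi_1\cdot y}e^{-{\rm i}\xi_2\cdot z}\,\mathbb E[f(y)\,(\cdot)\,f(z)]\,dy\,dz$, in which the frequencies $\xi_1,\xi_2$ and the kernel are dictated by the conjugations and by the signs of the observation directions. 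A product in which exactly one far-field factor is conjugated reproduces the covariance kernel $K_f^c$, whereas a product with no conjugation reproduces the relation kernel $K_f^r$; this is the mechanism that attaches \eqref{b16} and \eqref{b18} to $a^c$ and \eqref{b17} and \eqref{b19} to $a^r$.

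The decisive step is a change of phase that isolates the $(y-z)$-dependence, so that the inner integral collapses to a symbol through \eqref{a14}. For \eqref{b16} and \eqref{b17} the two directions coincide, and I would use the identity $-{\rm i}\kappa_1\hat x\cdot y+{\rm i}\kappa_2\hat x\cdot z=-{\rm i}\kappa_2\hat x\cdot(y-z)-{\rm i}(\kappa_1-\kappa_2)\hat x\cdot y$, so the inner $z$-integral yields $\sigma^\eta(y,\kappa_2\hat x)$ and the outer integral becomes $\int_{\mathbb R^d}\sigma^\eta(y,\kappa_2\hat x)\,e^{-{\rm i}(\kappa_1-\kappa_2)\hat x\cdot y}\,dy$. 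For \eqref{b18} and \eqref{b19} the directions are opposite, and the analogous identity $-{\rm i}\kappa_1\hat x\cdot y-{\rm i}\kappa_2\hat x\cdot z={\rm i}\kappa_2\hat x\cdot(y-z)-{\rm i}(\kappa_1+\kappa_2)\hat x\cdot y$ produces $\int_{\mathbb R^d}\sigma^\eta(y,-\kappa_2\hat x)\,e^{-{\rm i}(\kappa_1+\kappa_2)\hat x\cdot y}\,dy$. In every case the remaining integral is the Fourier transform of the symbol in the $y$-variable evaluated at $\Lambda\hat x$, with $\Lambda=\kappa_1-\kappa_2$ in the first pair and $\Lambda=\kappa_1+\kappa_2$ in the second; this single distinction is exactly what replaces $(1+|\kappa_1-\kappa_2|)^{-N}$ by $(1+\kappa_1+\kappa_2)^{-N}$.

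It then remains to insert the decomposition $\sigma^\eta(y,\pm\kappa_2\hat x)=a^\eta(y)\kappa_2^{-m}+b^\eta(y,\pm\kappa_2\hat x)$ from Definition \ref{def1}. The principal part contributes $\kappa_2^{-m}\,\widehat{a^\eta}(\Lambda\hat x)$, and since $a^\eta\in C_0^\infty(D)$ its Fourier transform is Schwartz, so $|\widehat{a^\eta}(\Lambda\hat x)|\lesssim(1+|\Lambda|)^{-N}$ for every $N$, which is precisely the claimed decay. For the residual I would integrate by parts $N$ times in the direction $\hat x$: each step contributes a factor $|\Lambda|^{-1}$ together with a directional derivative $\hat x\cdot\nabla_y$ of $b^\eta$, which by the symbol estimate $b^\eta\in\mathcal S^{-m-1}$ (the case $\gamma_1=0$) is bounded uniformly by $\kappa_2^{-m-1}$, while the compact support in $D$ keeps the spatial integral finite; combined with the trivial bound for $|\Lambda|\le 1$, this gives $\lesssim\kappa_2^{-m-1}(1+|\Lambda|)^{-N}$. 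Using $\kappa_2\ge 1$ so that $\kappa_2^{-m-1}\le\kappa_2^{-m}$, and restoring the prefactor $\kappa_1^{\frac{d-3}2}\kappa_2^{\frac{d-3}2}$, yields all four estimates at once.

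The main obstacle I anticipate is the rigorous treatment of the residual term $b^\eta(\cdot,\pm\kappa_2\hat x)$: one must verify that the integration-by-parts argument produces decay of order $N$ for every $N$, uniformly in $\kappa_2$ and in $\hat x$, using only the bounds on $x$-derivatives of the symbol together with the compact support, and that its contribution is genuinely subordinate to the principal term rather than merely of the same order. A secondary point requiring care is that $f$ is in general only a distribution, so each double integral above is to be understood as the distributional pairing defining $K_f^\eta$, exactly as in the proof of Lemma \ref{lm2}; the phase rearrangements are legitimate because the exponentials are smooth and $f$ is supported in the bounded set $D$.
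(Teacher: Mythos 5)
Your proposal is correct and follows essentially the same route as the paper: rearrange the phase so that the inner integral collapses, via \eqref{a14}, to the symbol $\sigma^\eta(y,\pm\kappa_2\hat x)$ (covariance symbol when exactly one factor is conjugated, relation symbol otherwise), and then extract the $N$-fold decay in $|\kappa_1\mp\kappa_2|$ by repeated integration by parts against the compactly supported, $y$-smooth symbol, treating $|\Lambda|\le 1$ by the trivial bound. The only differences are cosmetic and slightly to your advantage: you split off the principal part $a^\eta(y)\kappa_2^{-m}$ (handled by rapid decay of $\widehat{a^\eta}$) and integrate by parts only on the residual, and you use the directional derivative $\hat x\cdot\nabla_y$, which is uniform in $\hat x$, whereas the paper integrates by parts in the coordinate $y_1$ on the full symbol $\sigma^\eta$ (using the symbol-class bound $|\partial_y^\alpha\sigma^\eta(y,\kappa_2\hat x)|\lesssim(1+\kappa_2)^{-m}$) and thereby picks up factors of $1/\hat x_1$, which strictly speaking requires choosing a coordinate $j$ with $|\hat x_j|\ge d^{-1/2}$ to be uniform over the sphere.
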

\begin{proof}
It follows from \eqref{a14} and (\ref{b6}) that 
\begin{eqnarray}\nonumber
&&\mathbb E\left[u^{\infty}(\hat{x},\kappa_1)\overline{u^{\infty}(\hat{x},\kappa_2)}\right]\\\nonumber
&=&|C_d|^2(\kappa_1\kappa_2)^{\frac{d-3}2}\int_{\mathbb R^d}\int_{\mathbb R^d} e^{-{\rm i}\kappa_1\hat{x}\cdot y}e^{{\rm i}\kappa_2\hat{x}\cdot z}{\mathbb E}\left[f(y)\overline{f(z)}\right]dydz\\\nonumber
&=&|C_d|^2(\kappa_1\kappa_2)^{\frac{d-3}2}\int_{\mathbb R^d}\left[\int_{\mathbb R^d}e^{-{\rm i}\kappa_2\hat{x}\cdot (y-z)}K^c_f(y,z)dz\right]e^{{\rm i}(\kappa_2-\kappa_1)\hat{x}\cdot y}dy\\\label{x1}
&=&|C_d|^2(\kappa_1\kappa_2)^{\frac{d-3}2}\int_{D} \sigma^c(y,\kappa_2\hat{x})e^{{\rm i}(\kappa_2-\kappa_1)\hat{x}\cdot y}dy,
\end{eqnarray}
where the symbol $\sigma^c\in\mathcal{S}^{-m}(\mathbb R^d\times\mathbb R^d)$ satisfies
\begin{eqnarray}\label{b21}
\left|\partial^{\alpha}_y\sigma^c(y,\kappa_2\hat{x})\right|\lesssim(1+\kappa_2)^{-m}
\end{eqnarray}
for any multiple index $\alpha$, and we used the fact that $\sigma^c(\cdot,\kappa_2\hat x)$ is compactly supported in $D$.

If $|\kappa_1-\kappa_2|<1$,  we have from \eqref{x1}--\eqref{b21} that 
\begin{eqnarray*}
\left|{\mathbb E}\left[u^{\infty}(\hat{x},\kappa_1)\overline{u^{\infty}(\hat{x},\kappa_2)}\right]\right|&\lesssim& (\kappa_1\kappa_2)^{\frac{d-3}2}\int_{D} |\sigma^c(y,\kappa_2\hat{x})|dy\\
&\lesssim& (\kappa_1\kappa_2)^{\frac{d-3}2}(1+\kappa_2)^{-m}\\
&\lesssim&  (\kappa_1\kappa_2)^{\frac{d-3}2}\left(\frac{2}{1+|\kappa_1-\kappa_2|}\right)^N(1+\kappa_2)^{-m}\\
&\lesssim&  2^N\kappa_1^{\frac{d-3}{2}}\kappa_2^{\frac{d-3}{2}-m}(1+|\kappa_1-\kappa_2|)^{-N}.
\end{eqnarray*}

If $|\kappa_1-\kappa_2|\geq 1$, applying the integration by parts to (\ref{x1}) with respect to $y_1$ gives 
\begin{eqnarray*}
&&\mathbb E\left[u^{\infty}(\hat{x},\kappa_1)\overline{u^{\infty}(\hat{x},\kappa_2)}\right]\\
&=&|C_d|^2(\kappa_1\kappa_2)^{\frac{d-3}2}\frac{-1}{{\rm i}(\kappa_2-\kappa_1)\hat{x}_1}\int_D\partial_{y_1}\sigma^c(y,\kappa_2\hat{x})e^{{\rm i}(\kappa_2-\kappa_1)\hat{x}\cdot y}dy\\
&=&|C_d|^2(\kappa_1\kappa_2)^{\frac{d-3}2}\left(\frac{-1}{{\rm i}(\kappa_2-\kappa_1)\hat{x}_1}\right)^N\int_D\partial^N_{y_1}c_f(y,\kappa_2\hat{x})e^{{\rm i}(\kappa_2-\kappa_1)\hat{x}\cdot y}dy.
\end{eqnarray*}
Hence 
\begin{eqnarray*}
\left|{\mathbb E}\left[u^{\infty}(\hat{x},\kappa_1)\overline{u^{\infty}(\hat{x},\kappa_2)}\right]\right|&\lesssim& (\kappa_1\kappa_2)^{\frac{d-3}2}\frac{1}{|\kappa_1-\kappa_2|^N}(1+\kappa_2)^{-m}\\
&\lesssim& \kappa_1^{\frac{d-3}{2}}\kappa_2^{\frac{d-3}{2}-m}\left(1+\frac{1}{|\kappa_1-\kappa_2|}\right)^N(1+|\kappa_1-\kappa_2|)^{-N}\\
&\lesssim& 2^N \kappa_1^{\frac{d-3}{2}}\kappa_2^{\frac{d-3}{2}-m}(1+|\kappa_1-\kappa_2|)^{-N},
\end{eqnarray*}
which concludes \eqref{b16}. 

The estimate \eqref{b17} can be obtained similarly by noting 
\begin{eqnarray}\nonumber
&&\mathbb E\left[u^{\infty}(\hat{x},\kappa_1)u^{\infty}(-\hat{x},\kappa_2)\right]\\
&=&C_d^2(\kappa_1\kappa_2)^{\frac{d-3}2}\int_{\mathbb R^d}\int_{\mathbb R^d} e^{-{\rm i}\kappa_1\hat{x}\cdot y}e^{{\rm i}\kappa_2\hat{x}\cdot z}{\mathbb E}\left[f(y)f(z)\right]dydz\nonumber\\\nonumber
&=&C_d^2(\kappa_1\kappa_2)^{\frac{d-3}2}\int_{\mathbb R^d}\left[\int_{\mathbb R^d}e^{-{\rm i}\kappa_2\hat{x}\cdot (y-z)}K^r_f(y,z)dz\right]e^{{\rm i}(\kappa_2-\kappa_1)\hat{x}\cdot y}dy\\\label{x2}
&=&C_d^2(\kappa_1\kappa_2)^{\frac{d-3}2}\int_{\mathbb R^d}\sigma^r(y,\kappa_2\hat{x})e^{{\rm i}(\kappa_2-\kappa_1)\hat{x}\cdot y}dy,
\end{eqnarray}
where the estimate is similar to \eqref{x1} with $\sigma^c$ being replaced by $\sigma^r$. 

For \eqref{b18} and \eqref{b19}, we rewrite the correlations as follows
\begin{eqnarray*}
{\mathbb E}\left[u^{\infty}(\hat{x},\kappa_1)\overline{u^{\infty}(-\hat{x},\kappa_2)}\right]&=&|C_d|^2(\kappa_1\kappa_2)^{\frac{d-3}2}\int_{\mathbb R^d}\int_{\mathbb R^d} e^{-{\rm i}\kappa_1\hat{x}\cdot y}e^{-{\rm i}\kappa_2\hat{x}\cdot z}{\mathbb E}\left[f(y)\overline{f(z)}\right]dydz,\\
{\mathbb E}\left[u^{\infty}(\hat{x},\kappa_1)u^{\infty}(\hat{x},\kappa_2)\right]&=&C_d^2(\kappa_1\kappa_2)^{\frac{d-3}2}\int_{\mathbb R^d}\int_{\mathbb R^d} e^{-{\rm i}\kappa_1\hat{x}\cdot y}e^{-{\rm i}\kappa_2\hat{x}\cdot z}{\mathbb E}\left[f(y)f(z)\right]dydz.
\end{eqnarray*}
Comparing the above formulas with \eqref{x1} and \eqref{x2}, it is easily seen that they can be estimated similarly to \eqref{b16} and \eqref{b17} by replacing $\kappa_2$ by $-\kappa_2$, respectively, which completes the proofs of \eqref{b18} and \eqref{b19}.
\end{proof}

\begin{theorem}\label{thm3}
Let $f$ satisfy Assumption \ref{as1} with $m\in (d-4,d]$. Then for all $\hat{x}\in {\mathbb S}^{d-1}$ and $\tau\geq 0$, it holds almost surely that 
\begin{eqnarray}\label{b12}
\lim_{Q\to\infty}\frac{1}{Q}\int_Q^{2Q}\kappa^{m+3-d}u^{\infty}(\hat{x},\kappa+\tau)\overline{u^{\infty}(\hat{x},\kappa)}d\kappa&=&|C_d|^2\widehat{a^c}(\tau\hat{x}),\\\label{b13}
\lim_{Q\to\infty}\frac{1}{Q}\int_Q^{2Q}\kappa^{m+3-d}u^{\infty}(\hat{x},\kappa+\tau)u^{\infty}(-\hat{x},\kappa)d\kappa&=&C_d^2\widehat{a^r}(\tau\hat{x}).
\end{eqnarray}
Moreover, $a^c$ and $a^r$ can be uniquely determined by (\ref{b12}) and (\ref{b13}), respectively, with $(\tau, \hat{x})\in\Theta$ and $\Theta\subset\mathbb R_+\times\mathbb S^{d-1}$ being any open domain.
\end{theorem}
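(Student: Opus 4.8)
The plan is to establish the almost-sure limits \eqref{b12}--\eqref{b13} by a variance estimate combined with a subsequence Borel--Cantelli argument, and then to deduce the uniqueness by analytic continuation. Fixing $\hat x$ and $\tau$, I write
\[
I_Q:=\frac1Q\int_Q^{2Q}\kappa^{m+3-d}u^\infty(\hat x,\kappa+\tau)\overline{u^\infty(\hat x,\kappa)}\,d\kappa,
\]
and treat \eqref{b12} and \eqref{b13} in parallel. By Lemma \ref{lm2} the expectation of the integrand, $g(\kappa):=\kappa^{m+3-d}\mathbb E[u^\infty(\hat x,\kappa+\tau)\overline{u^\infty(\hat x,\kappa)}]$, converges to $|C_d|^2\widehat{a^c}(\tau\hat x)$ as $\kappa\to\infty$; the Ces\`aro averaging of a convergent function then gives $\mathbb E[I_Q]\to|C_d|^2\widehat{a^c}(\tau\hat x)$. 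It therefore suffices to prove that $I_Q-\mathbb E[I_Q]\to0$ almost surely.

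The core step is an estimate of $\mathrm{Var}(I_Q)$. Expressing $I_Q$ as a single frequency integral, its variance becomes a double integral over $[Q,2Q]^2$ of the covariance of the integrands at $\kappa_1$ and $\kappa_2$. Since for each frequency $u^\infty(\hat x,\cdot)$ is a centered complex Gaussian variable (a continuous linear functional of the Gaussian source $f$ through \eqref{b6}), I would expand this covariance using the Wick/Isserlis formula for complex Gaussian variables. Every resulting cross-pairing is a second moment of exactly the type estimated in Lemma \ref{lem2}: the same-direction covariance pairings \eqref{b16} supply the off-diagonal decay $(1+|\kappa_1-\kappa_2|)^{-N}$, whereas the relation-type and opposite-direction pairings \eqref{b17}--\eqref{b19} supply the stronger $(1+\kappa_1+\kappa_2)^{-N}$ decay. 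Inserting these bounds and using the weight $\kappa^{m+3-d}$ to cancel the power factors, the near-diagonal part dominates and gives $\mathrm{Var}(I_Q)\lesssim Q^{-1}$ (any fixed negative power of $Q$ would suffice below).

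With this bound I pass to the subsequence $Q_n=n^2$. Since $\sum_n\mathrm{Var}(I_{Q_n})\lesssim\sum_n n^{-2}<\infty$, Chebyshev's inequality and the Borel--Cantelli lemma yield $I_{Q_n}-\mathbb E[I_{Q_n}]\to0$, hence $I_{Q_n}\to|C_d|^2\widehat{a^c}(\tau\hat x)$ almost surely. To recover the full limit I control the oscillation $\sup_{Q_n\le Q\le Q_{n+1}}|I_Q-I_{Q_n}|$: because $Q_{n+1}/Q_n\to1$, this difference is an average of the same integrand over frequency intervals of relative length $O(1/n)$, and a further second-moment estimate of these increments built from Lemma \ref{lem2}, combined once more with Borel--Cantelli, forces the oscillation to vanish almost surely. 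This proves \eqref{b12}; the proof of \eqref{b13} is identical after replacing $\overline{u^\infty(\hat x,\kappa)}$ by $u^\infty(-\hat x,\kappa)$ and invoking \eqref{b8} instead of \eqref{b7}.

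For the uniqueness, the limits \eqref{b12}--\eqref{b13} determine $\widehat{a^c}(\tau\hat x)$ and $\widehat{a^r}(\tau\hat x)$ for all $(\tau,\hat x)\in\Theta$, and as $\Theta$ is open the set of vectors $\tau\hat x$ contains a nonempty open subset of $\mathbb R^d$. Since $a^c,a^r\in C_0^\infty(D)$ are compactly supported, the Paley--Wiener theorem makes $\widehat{a^c}$ and $\widehat{a^r}$ restrictions of entire functions, hence real-analytic on $\mathbb R^d$; a real-analytic function known on a nonempty open set is determined everywhere by the identity theorem, so $\widehat{a^c}$ and $\widehat{a^r}$ are known on all of $\mathbb R^d$ and Fourier inversion recovers $a^c$ and $a^r$ uniquely. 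I expect the variance estimate of the second paragraph to be the main obstacle: one must organize the complex Wick expansion so that each pairing is matched to the correct bound among \eqref{b16}--\eqref{b19} and verify that the power weights cancel to leave a summable variance, which is precisely what drives both the subsequence Borel--Cantelli step and the gap-filling.
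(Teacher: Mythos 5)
Your proposal is correct, but it takes a genuinely different route from the paper in the key almost-sure convergence step. The paper never forms the variance of the frequency average directly: it decomposes $u^{\infty}(\hat{x},\kappa+\tau)\overline{u^{\infty}(\hat{x},\kappa)}$ into real and imaginary parts $U,V$, writes the product as a combination of squares of eight real Gaussian processes $W_\kappa\in\Gamma_1\cup\Gamma_2$, and then invokes an external ergodic-type theorem (Theorem 4.1 of \cite{CHL} and Lemma 6 of \cite{LLW2}) for real centered processes, whose hypothesis is a covariance-decay bound $|\mathbb E[X_\kappa X_{\kappa+t}]|\le C_\tau(1+|t-\eta|)^{-\beta}$ uniform in $\kappa$ and all lags $t>0$; this hypothesis is verified via the real Isserlis identity $\mathbb E[(W_\kappa^2-\mathbb EW_\kappa^2)(W_{\kappa+t}^2-\mathbb EW_{\kappa+t}^2)]=2(\mathbb E[W_\kappa W_{\kappa+t}])^2$ together with Lemma \ref{lem2}. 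You instead keep the complex product intact, expand $\mathbb E|I_Q-\mathbb EI_Q|^2$ by the complex Wick formula (correctly retaining the relation-type pairings, since the source is not circularly symmetric, and matching them to \eqref{b19} while the covariance pairings go to \eqref{b16}), obtain $\mathrm{Var}(I_Q)\lesssim Q^{-1}$, and then run Chebyshev--Borel--Cantelli along $Q_n=n^2$ with a second-moment oscillation bound to fill the gaps; this is in effect an inlined, self-contained proof of the ergodicity lemma the paper cites. The power counting in your variance bound does check out (the weights $\kappa^{m+3-d}$ cancel the $\kappa_1^{\frac{d-3}{2}}\kappa_2^{\frac{d-3}{2}-m}$ factors up to a bounded ratio $(\kappa_1/\kappa_2)^m$ on $[Q,2Q]^2$), and the gap-filling needs only the uniform bound $\mathbb E|g(\kappa)|^2=O(1)$, which follows from Gaussian fourth moments and the diagonal case of Lemma \ref{lem2}. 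What each approach buys: the paper's is shorter given the citations and avoids redoing the Borel--Cantelli machinery, at the cost of the somewhat heavy $U,V$ decomposition and of needing covariance decay for all lags; yours avoids the eight-process decomposition, works directly with the complex data, and only requires estimates on the band $[Q,2Q]$, at the cost of carrying out the subsequence and oscillation arguments explicitly. Your uniqueness argument (Paley--Wiener analyticity of $\widehat{a^c},\widehat{a^r}$ plus the identity theorem on the open cone swept by $\Theta$) is the same as the paper's.
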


\begin{proof}
We only give the proof of \eqref{b12} since the proof of \eqref{b13} can be obtained similarly by using \eqref{b8} in Lemma \ref{lm2}.

Based on the proof of Lemma \ref{lm2}, we multiply both sides of \eqref{eq:lm2} by $\kappa^{m+3-d}$,  take integral with respect to $\kappa$, and get
\begin{eqnarray*}
&&\frac1Q\int_Q^{2Q}\kappa^{m+3-d}{\mathbb E}\left[u^{\infty}(\hat{x},\kappa+\tau)\overline{u^{\infty}(\hat{x},\kappa)}\right]d\kappa\\
&=&|C_d|^2\widehat{a^c}(\tau\hat x)\left[\frac1Q\int_Q^{2Q}\left(\frac{\kappa}{\kappa+\tau}\right)^{\frac{3-d}2}d\kappa\right]+O\left(Q^{-1}\right).
\end{eqnarray*}
Noting 
\[
\frac1Q\int_Q^{2Q}\left(\frac{\kappa}{\kappa+\tau}\right)^{\frac{3-d}2}d\kappa\le1
\]
and
\[
\lim_{Q\to\infty}\frac1Q\int_Q^{2Q}\left(\frac{\kappa}{\kappa+\tau}\right)^{\frac{3-d}2}d\kappa\ge\lim_{Q\to\infty}\frac1Q\int_Q^{2Q}\left(\frac{Q}{Q+\tau}\right)^{\frac{3-d}2}d\kappa=1
\]
leads to 
\begin{eqnarray}\label{b23}
\lim_{Q\to\infty}\frac1Q\int_Q^{2Q}\kappa^{m+3-d}\mathbb E\left[u^{\infty}(\hat{x},\kappa+\tau)\overline{u^{\infty}(\hat{x},\kappa)}\right]d\kappa=|C_d|^2\widehat{a^c}(\tau\hat x).
\end{eqnarray}

To characterize the error between \eqref{b12} and \eqref{b23}, we define an auxiliary process
\begin{eqnarray*}\label{b24}
Y(\hat{x},\kappa):=\kappa^{m+3-d}\left(u^{\infty}(\hat{x},\kappa+\tau)\overline{u^{\infty}(\hat{x},\kappa)}-{\mathbb E}\left[u^{\infty}(\hat{x},\kappa+\tau)\overline{u^{\infty}(\hat{x},\kappa)}\right]\right).
\end{eqnarray*}
For convenience, we denote by
\begin{eqnarray*}\label{b26}
U(\hat{x},\kappa):=\frac{1}{2}\left[u^{\infty}(\hat{x},\kappa)+\overline{u^{\infty}(\hat{x},\kappa)}\right],\quad V(\hat{x},\kappa):=\frac{1}{2{\rm i}}\left[u^{\infty}(\hat{x},\kappa)-\overline{u^{\infty}(\hat{x},\kappa)}\right]
\end{eqnarray*}
the real and imaginary parts of $u^{\infty}(\hat{x},\kappa)$, respectively. Then $u^{\infty}(\hat{x},\kappa+\tau)\overline{u^{\infty}(\hat{x},\kappa)}$ can be rewritten as
\begin{eqnarray*}\nonumber
u^{\infty}(\hat{x},\kappa+\tau)\overline{u^{\infty}(\hat{x},\kappa)}&=&\left[U(\hat{x},\kappa+\tau)+{\rm i}V(\hat{x},\kappa+\tau)\right]\left[U(\hat{x},\kappa)-{\rm i}V(\hat{x},\kappa)\right]\\\nonumber
&=&\frac{1+{\rm i}}2\left[U^2(\hat{x},\kappa)+U^2(\hat{x},\kappa+\tau)+V^2(\hat{x},\kappa)+V^2(\hat{x},\kappa+\tau)\right]\\\nonumber
&&-\frac12\left(U(\hat{x},\kappa)-U(\hat{x},\kappa+\tau)\right)^2-\frac12\left(V(\hat{x},\kappa)-V(\hat{x},\kappa+\tau)\right)^2\\\label{b27}
&&-\frac{\rm i}2\left(U(\hat{x},\kappa+\tau)+V(\hat{x},\kappa)\right)^2-\frac{\rm i}2\left(V(\hat{x},\kappa+\tau)-U(\hat{x},\kappa)\right)^2.
\end{eqnarray*}
Define $\Gamma=\Gamma_1\cup \Gamma_2$, where 
\begin{eqnarray*}
&&\Gamma_1:=\{U(\hat{x},\kappa),V(\hat{x},\kappa),U(\hat{x},\kappa+\tau),V(\hat{x},\kappa+\tau)\},\\
&&\Gamma_2:=\{U(\hat{x},\kappa)-U(\hat{x},\kappa+\tau),V(\hat{x},\kappa)-V(\hat{x},\kappa+\tau),\\
&&\qquad\quad U(\hat{x},\kappa+\tau)+V(\hat{x},\kappa),V(\hat{x},\kappa+\tau)-U(\hat{x},\kappa)\},
\end{eqnarray*}
and let $W_{\kappa}$ be any random field in $\Gamma$. Based on these notations, we have
\begin{eqnarray*}\label{b28}
Y(\hat{x},\kappa)=\sum_{W_{\kappa}\in\Gamma}C(W_{\kappa})\kappa^{m+3-d}(W_{\kappa}^2-{\mathbb E}W_{\kappa}^2),
\end{eqnarray*}
where $C(W_{\kappa})\in\{\frac{1+{\rm i}}2, -\frac12,-\frac{\rm i}2\}$ is a constant depending on $W_{\kappa}$. 

Now it suffices to show for all $W_\kappa\in\Gamma$ that 
\begin{eqnarray}\label{b29}
\lim_{Q\to\infty}\frac{1}{Q}\int_Q^{2Q}\kappa^{m+3-d}(W_{\kappa}^2-{\mathbb E}W_{\kappa}^2)d\kappa=0.
\end{eqnarray}
 Hence
\begin{eqnarray*}\label{b25}
\lim_{Q\to\infty}\frac{1}{Q}\int_Q^{2Q}Y(\hat{x},\kappa)d\kappa=0,
\end{eqnarray*}
which, together with \eqref{b23}, yields \eqref{b12}.

To prove \eqref{b29}, by denoting the $\mathbb R$-valued centered random field
\[
X_\kappa:=\kappa^{m+3-d}(W_{\kappa}^2-{\mathbb E}W_{\kappa}^2)
\]
according to \cite[Theorem 4.1]{CHL} and \cite[Lemma 6]{LLW2}, one only need to show that there exist some constants $\eta\ge0$, $\beta>0$ and $C_{\tau}>0$ independent of $\kappa$ and $t$ such that
\[
|\mathbb E[X_{\kappa}X_{\kappa+t}]|\le C_\tau(1+|t-\eta|)^{-\beta}\quad\forall~\kappa,t>0.
\]
More precisely, it suffices to show
\begin{eqnarray}\label{eq:thm3}
&&\left|\mathbb E\left[\kappa^{m+3-d}(W_{\kappa}^2-{\mathbb E}W_{\kappa}^2)(\kappa+t)^{m+3-d}(W_{\kappa+t}^2-{\mathbb E}W_{\kappa+t}^2)\right]\right|\notag\\
&=&2\left({\mathbb E}\left[\kappa^{\frac{m+3-d}{2}}(\kappa+t)^{\frac{m+3-d}{2}}W_{\kappa}W_{\kappa+t}\right]\right)^2\notag\\
&\le&C_\tau(1+|t-\eta|)^{-\beta}
\end{eqnarray}
for all $W_\kappa\in\Gamma$, where in the first step we used \cite[Lemma 4.2]{CHL} and the fact that $W_\kappa$ is Gaussian.

For any $W_{\kappa}\in\Gamma_1$, based on the identities 
\begin{eqnarray*}\label{b31}
&&U(\hat{x},\kappa_1)U(\hat{x},\kappa_2)=\frac{1}{4}\left[u^{\infty}(\hat{x},\kappa_1)+\overline{u^{\infty}(\hat{x},\kappa_1)}\right]\left[u^{\infty}(\hat{x},\kappa_2)+\overline{u^{\infty}(\hat{x},\kappa_2)}\right],\\\label{b32}
&&V(\hat{x},\kappa_1)V(\hat{x},\kappa_2)=-\frac{1}{4}\left[u^{\infty}(\hat{x},\kappa_1)-\overline{u^{\infty}(\hat{x},\kappa_1)}\right]\left[u^{\infty}(\hat{x},\kappa_2)-\overline{u^{\infty}(\hat{x},\kappa_2)}\right]
\end{eqnarray*}
and Lemma \ref{lem2}, we get 
\begin{eqnarray*}\label{b34}
&&\left|{\mathbb E}\left[U(\hat{x},\kappa_1)U(\hat{x},\kappa_2)\right]\right|\lesssim\kappa_1^{\frac{d-3}{2}}\kappa_2^{\frac{d-3}{2}-m}(1+|\kappa_1-\kappa_2|)^{-N},\\\label{b35}
&&\left|{\mathbb E}\left[V(\hat{x},\kappa_1)V(\hat{x},\kappa_2)\right]\right|\lesssim\kappa_1^{\frac{d-3}{2}}\kappa_2^{\frac{d-3}{2}-m}(1+|\kappa_1-\kappa_2|)^{-N}.
\end{eqnarray*}
As a result, it holds
\begin{eqnarray}\nonumber
&&\left|{\mathbb E}\left[\kappa^{\frac{m+3-d}{2}}(\kappa+t)^{\frac{m+3-d}{2}}W_\kappa W_{\kappa+t}\right]\right|\\\nonumber
&\lesssim& \kappa^{\frac{m+3-d}{2}}(\kappa+t)^{\frac{m+3-d}{2}}\kappa^{\frac{d-3}{2}}(\kappa+t)^{\frac{d-3}{2}-m}(1+t)^{-N}\\\nonumber
&\lesssim&\left(\frac{\kappa}{\kappa+t}\right)^{\frac{m}{2}}(1+t)^{-N}\\\label{b37}
&\lesssim& (1+t)^{-\left(N+\frac{m}2\wedge0\right)},
\end{eqnarray}
where we used the facts that
\[
\left(\frac{\kappa}{\kappa+t}\right)^{\frac{m}{2}}\le1
\]
for $m\ge0$ and
\[
\left(\frac{\kappa}{\kappa+t}\right)^{\frac{m}{2}}=\left(1+\frac{t}{\kappa}\right)^{-\frac m2}\le(1+t)^{-\frac m2}
\]
for $m<0$.
Then \eqref{b37} implies that (\ref{eq:thm3}) holds for $W_{\kappa}\in\Gamma_1$ by choosing $N>-\frac m2$.

For any $W_{\kappa}\in\Gamma_2$, we take $W_{\kappa}=U(\hat{x},\kappa)-U(\hat{x},\kappa+\tau)$ for instance. The other cases can be estimated similarly. Note that 
\begin{eqnarray*}
\left|{\mathbb E}\left[W_{\kappa}W_{\kappa+t}\right]\right|&=&\left|{\mathbb E}\left[(U(\hat{x},\kappa)-U(\hat{x},\kappa+\tau))(U(\hat{x},\kappa+t)-U(\hat{x},\kappa+\tau+t))\right]\right|\\
&\leq& \left|{\mathbb E}\left[U(\hat{x},\kappa)U(\hat{x},\kappa+t)\right]\right|+\left|{\mathbb E}\left[U(\hat{x},\kappa)U(\hat{x},\kappa+\tau+t)\right]\right|\\
&&+\left|{\mathbb E}\left[U(\hat{x},\kappa+\tau)U(\hat{x},\kappa+t)\right]\right|+\left|{\mathbb E}\left[U(\hat{x},\kappa+\tau)U(\hat{x},\kappa+\tau+t)\right]\right|\\
&\lesssim& \kappa^{\frac{d-3}{2}}(\kappa+t)^{\frac{d-3}2-m}(1+t)^{-N}+\kappa^{\frac{d-3}{2}}(\kappa+\tau+t)^{\frac{d-3}2-m}(1+t+\tau)^{-N}\\
&&+(\kappa+\tau)^{\frac{d-3}{2}}(\kappa+t)^{\frac{d-3}2-m}(1+|t-\tau|)^{-N}\\
&&+(\kappa+\tau)^{\frac{d-3}{2}}(\kappa+\tau+t)^{\frac{d-3}2-m}(1+t)^{-N}\\
&\lesssim& \kappa^{\frac{d-3}{2}}(\kappa+t)^{\frac{d-3}2-m}(1+t)^{-N}+(\kappa+\tau)^{\frac{d-3}{2}}(\kappa+t)^{\frac{d-3}2-m}(1+|t-\tau|)^{-N}\\
&&+(\kappa+\tau)^{\frac{d-3}{2}}(\kappa+\tau+t)^{\frac{d-3}2-m}(1+t)^{-N}.
\end{eqnarray*}
It then leads to 
\begin{eqnarray}\nonumber
&&\left|{\mathbb E}\left[\kappa^{\frac{m+3-d}{2}}(\kappa+t)^{\frac{m+3-d}{2}}W_{\kappa}W_{\kappa+t}\right]\right|\\\nonumber
&\lesssim& \kappa^{\frac{m}{2}}(\kappa+t)^{-\frac{m}{2}}(1+t)^{-N}+\kappa^{\frac{m}{2}}\left(\frac{\kappa}{\kappa+\tau}\right)^{\frac{3-d}{2}}(\kappa+t)^{-\frac{m}{2}}(1+|t-\tau|)^{-N}\\\nonumber
&&+\left(\frac{\kappa}{\kappa+\tau}\right)^{\frac{3-d}{2}}\left(\frac{\kappa}{\kappa+t+\tau}\right)^{\frac m2}\left(\frac{\kappa+t}{\kappa+t+\tau}\right)^{\frac{m+3-d}2}(1+t)^{-N}\\\label{b39}
&\lesssim& (1+t)^{-(N+m\wedge0)}+(1+|t-\tau|)^{-N},
\end{eqnarray}
where we used the estimates in \eqref{b37} and the facts that
\[
\left(\frac{\kappa}{\kappa+t+\tau}\right)^{\frac m2}\left(\frac{\kappa+t}{\kappa+t+\tau}\right)^{\frac{m+3-d}2}\le\left(\frac{\kappa}{\kappa+t+\tau}\right)^{\frac m2}\left(\frac{\kappa+t}{\kappa+t+\tau}\right)^{\frac{m}2}\le1
\]
for $m\ge0$ and 
\begin{eqnarray*}
\left(\frac{\kappa}{\kappa+t+\tau}\right)^{\frac m2}\left(\frac{\kappa+t}{\kappa+t+\tau}\right)^{\frac{m+3-d}2}&\le&\left(\frac{\kappa}{\kappa+t+\tau}\right)^m
\lesssim(1+t+\tau)^{-m}\\
&\lesssim&(1+\tau)^{-m}(1+t)^{-m}
\end{eqnarray*}
for $m<0$. It then completes the proof of \eqref{eq:thm3} for $W_\kappa\in\Gamma_2$. 

Combining \eqref{b37} and \eqref{b39} yields \eqref{eq:thm3} for all $W_\kappa\in\Gamma$ and deduces \eqref{b12}.

Since  $a^c$ and $a^r$ are analytic, they can be uniquely determined by $\{\widehat{a^c}(\tau\hat{x})\}$ and $\{\widehat{a^r}(\tau\hat{x})\}$, where $(\tau,\hat{x})\in \Theta$ with $\Theta$ being any open subdomain of $\mathbb R_+\times\mathbb S^{1}$. 
\end{proof}

\section{Biharmonic waves}

In this section, we study the direct and inverse source problems for the stochastic biharmonic wave equation 
\begin{eqnarray}\label{c1}
\Delta^2u-\kappa^4u=f\quad{\rm in}~\mathbb R^d,
\end{eqnarray}
where $f$ is assumed to be a $\mathbb C$-valued GMIG random field satisfying Assumption \ref{as1} with $m\in (d-6,d]$. In addition, the wave field $u$ and its Laplacian $\Delta u$ are required to satisfy the Sommerfeld radiation condition 
\begin{eqnarray}\label{c2}
\lim_{|x|\to\infty}|x|^{\frac{d-1}{2}}(\partial_{|x|}u-{\rm i}\kappa u)=0,\quad \lim_{|x|\to\infty}|x|^{\frac{d-1}{2}}(\partial_{|x|}\Delta u-{\rm i}\kappa\Delta u)=0.
\end{eqnarray}

Given $f$, the well-posedness of the problem \eqref{c1}--\eqref{c2} was studied in \cite[Theorem 3.2]{LW4} and is given below. 

\begin{theorem}%\label{thm4}
Let $f$ satisfy Assumption \ref{as1} with $m\in(d-6,d]$. Then the problem (\ref{c1})--(\ref{c2}) admits a unique solution 
\begin{eqnarray}\label{c3}
u(x,\kappa)=-\int_{\mathbb R^d}F_d(x,y,\kappa)f(y)dy
\end{eqnarray}
in the sense of distributions such that $u\in W_{\rm loc}^{\gamma,q}(\mathbb R^d)$ almost surely for any $q>1$ and $0<\gamma<\min\left\{\frac{6-d+m}{2},\frac{6-d+m}{2}+\left(\frac{1}{q}-\frac{1}{2}\right)d\right\}$.
\end{theorem}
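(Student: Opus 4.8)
The plan is to mirror the structure of the proof of Theorem \ref{thm1} almost verbatim, substituting the biharmonic fundamental solution $F_d$ for $\Phi_d$ and adjusting the regularity exponent to reflect that the biharmonic operator $\Delta^2 - \kappa^4$ is of order four rather than two. First I would invoke Lemma \ref{lem1}: since $f$ satisfies Assumption \ref{as1} with $m\in(d-6,d]$, we have $f\in W^{\frac{m-d}{2}-\epsilon,p}(D)$ almost surely for any $\epsilon>0$ and $p>1$, and by the Kondrachov embedding theorem this embeds continuously into $H^{-s_1}(D)$ for $s_1\in(\frac{d-m}{2},4)$. The key change from the acoustic case is that the admissible range for $s_1$ now extends up to $4$ rather than $2$, since the volume potential associated with $F_d$ smooths by four derivatives instead of two.

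The central step is to establish the mapping property of the biharmonic volume potential operator
\[
(\mathcal{K}_\kappa f)(x):=-\int_{\mathbb R^d}F_d(x,y,\kappa)f(y)\,dy,
\]
namely that $\mathcal{K}_\kappa\colon H^{-s_1}(D)\to H^{s_2}(G)$ is bounded for any bounded Lipschitz domain $G$ and any $s_1,s_2>0$ with $s:=s_1+s_2\in(0,4]$. Since $F_d=\frac{1}{2\kappa^2}[\Phi_d(\cdot,\cdot,\kappa)-\Phi_d(\cdot,\cdot,{\rm i}\kappa)]$ by its definition, the kernel $F_d$ is smoother than $\Phi_d$ by exactly two orders; this is precisely what upgrades the gain from two derivatives to four. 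I would obtain this either by duality against H\"older spaces $C^{0,\alpha}(D)$ and $C^{4,\alpha}(G)$, exactly as in the acoustic argument but with $H^{s_2-2}$ replaced by $H^{s_2-4}$, or simply by citing \cite[Theorem 3.2]{LW4}, which is the result being stated. Indeed, the cleanest route is to note that the statement is quoted directly from \cite[Theorem 3.2]{LW4}, so the proof reduces to verifying that Assumption \ref{as1} supplies a source of the regularity required there.

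From the boundedness of $\mathcal{K}_\kappa$ one argues as before: since $s_1\in(\frac{d-m}{2},4)$, it holds $0<s_2\le 4-s_1<\frac{6-d+m}{2}$, so one may choose $s_2=\frac{6-d+m}{2}-\epsilon$, and then for any $\gamma,q$ in the stated range there exists $\epsilon>0$ with $\gamma<s_2$ and $\frac{1}{q}>\frac{1}{2}-\frac{s_2-\gamma}{d}$, making the Sobolev embedding $H^{s_2}(G)\hookrightarrow W^{\gamma,q}(G)$ continuous. Uniqueness follows from the radiation conditions \eqref{c2} together with the factorization of $\Delta^2-\kappa^4=(\Delta+\kappa^2)(\Delta-\kappa^2)$, which reduces uniqueness to that for the Helmholtz operator. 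The main obstacle I anticipate is technical rather than conceptual: verifying the order-four smoothing of $\mathcal{K}_\kappa$ rigorously at the endpoint $s=4$ and tracking the two-derivative gain coming from the $\kappa^{-2}$ factor in $F_d$, since the singularity structure of $\Phi_d(\cdot,\cdot,{\rm i}\kappa)$ must be handled in tandem with that of $\Phi_d(\cdot,\cdot,\kappa)$; but as this is established in \cite{LW4}, the proof here is essentially a citation with a check that the source regularity hypotheses match.
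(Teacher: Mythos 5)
Your proposal takes the same route as the paper: the paper provides no proof of this theorem at all --- the sentence preceding it states that the well-posedness ``was studied in \cite[Theorem 3.2]{LW4} and is given below'' --- so the citation you identify as the cleanest route is exactly the paper's own argument, and the only real obligation (checking that Assumption \ref{as1} with $m\in(d-6,d]$ matches the hypotheses of that cited theorem) is the same one the paper implicitly discharges.

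One inconsistency in your supplementary self-contained sketch deserves flagging. If, as you claim, the biharmonic volume potential is bounded from $H^{-s_1}(D)$ to $H^{s_2}(G)$ whenever $s_1+s_2\in(0,4]$, then from $s_1>\frac{d-m}{2}$ one gets $4-s_1<\frac{8-d+m}{2}$, not $\frac{6-d+m}{2}$; your chain ``$0<s_2\le 4-s_1<\frac{6-d+m}{2}$'' is false for $s_1$ near $\frac{d-m}{2}$. The slip is harmless for the stated theorem --- you may still choose $s_2=\frac{6-d+m}{2}-\epsilon$, since then $s_1+s_2\approx 3\le 4$ --- but it hides a real mismatch: a genuine order-four smoothing estimate would prove the stronger conclusion $\gamma<\frac{8-d+m}{2}$ on the wider range $m\in(d-8,d]$, whereas the theorem (and \cite[Theorem 3.2]{LW4}) records exponents corresponding to a gain of only three derivatives, i.e., to $s_1+s_2\le 3$. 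Either the quoted result is suboptimal or the mapping property actually available is weaker than the endpoint $s=4$ one you assert; a careful write-up should resolve which, rather than adjusting the arithmetic to land on the stated constant.
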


Next we address the inverse problem for biharmonic waves. Combining  \eqref{a29} and \eqref{c3} leads to 
\[
u(x,\kappa)=\frac{e^{{\rm i}\kappa|x|}}{|x|^{\frac{d-1}2}}\left(u^\infty(\hat x,\kappa)+O(|x|^{-1})\right),
\]
where the far-field pattern is given by 
\begin{eqnarray}\label{c5}
u^{\infty}(\hat{x},\kappa)=-\frac{C_d}2\kappa^{\frac{d-7}2}\int_{\mathbb R^d}e^{-{\rm i}\kappa\hat{x}\cdot y}f(y)dy.
\end{eqnarray}

It is easy to note from \eqref{b6} and \eqref{c5} that the procedure used in Section \ref{sec:acoustic} for acoustic waves is applicable for biharmonic waves. The following is the main result for the inverse source problem of the biharmonic wave equation. 

\begin{theorem}%\label{thm5}
Let $f$ satisfy Assumption \ref{as1} with $m\in (d-6,d]$. Then for all $\hat{x}\in {\mathbb S}^{d-1}$ and $\tau\geq 0$, it holds almost surely that 
\begin{eqnarray}\label{c7}
&&\lim_{Q\to\infty}\frac{1}{Q}\int_Q^{2Q}\kappa^{m+7-d}u^{\infty}(\hat{x},\kappa+\tau)\overline{u^{\infty}(\hat{x},\kappa)}d\kappa=\frac14|C_d|^2\widehat{a^c}(\tau\hat{x}),\\\label{c8}
&&\lim_{Q\to\infty}\frac{1}{Q}\int_Q^{2Q}\kappa^{m+7-d}u^{\infty}(\hat{x},\kappa+\tau)u^{\infty}(-\hat{x},\kappa)d\kappa=\frac14C_d^2\widehat{a^r}(\tau\hat{x}).
\end{eqnarray}
Moreover, $a^c$ and $a^r$ can be uniquely determined by (\ref{c7}) and (\ref{c8}), respectively, with $(\tau, \hat{x})\in\Theta$ and $\Theta\subset\mathbb R_+\times\mathbb S^{d-1}$ being any open domain.
\end{theorem}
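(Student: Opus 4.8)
The plan is to follow the acoustic analysis of Section~\ref{sec:acoustic} almost verbatim, exploiting the fact that the biharmonic far-field pattern \eqref{c5} has exactly the same structure as the acoustic one \eqref{b6}. Comparing the two, the only differences are that the constant $C_d$ is replaced by $\frac{C_d}{2}$ and the frequency prefactor $\kappa^{\frac{d-3}{2}}$ is replaced by $\kappa^{\frac{d-7}{2}}$. Consequently, a product of two far-field factors carries an extra power $\kappa^{-4}$ and an extra constant $\frac14$ relative to the acoustic case, which is precisely compensated by replacing the normalization $\kappa^{m+3-d}$ with $\kappa^{m+7-d}$ and the constants $|C_d|^2,\,C_d^2$ with $\frac14|C_d|^2,\,\frac14 C_d^2$. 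Thus I would first record the biharmonic analogues of Lemma~\ref{lm2} and Lemma~\ref{lem2}, and then repeat the ergodicity argument of Theorem~\ref{thm3}.

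First I would establish the expectation-limit identities (the analogue of Lemma~\ref{lm2}): substituting \eqref{c5} into $\mathbb E[u^\infty(\hat x,\kappa+\tau)\overline{u^\infty(\hat x,\kappa)}]$ and using the kernel--symbol relation \eqref{a14}, the same computation as in \eqref{eq:lm2} yields
\[
\kappa^{m+7-d}\,\mathbb E\big[u^\infty(\hat x,\kappa+\tau)\overline{u^\infty(\hat x,\kappa)}\big]=\tfrac14|C_d|^2\Big(\tfrac{\kappa}{\kappa+\tau}\Big)^{\frac{7-d}{2}}\widehat{a^c}(\tau\hat x)+O(\kappa^{-1}),
\]
and letting $\kappa\to\infty$ gives $\frac14|C_d|^2\widehat{a^c}(\tau\hat x)$; the relation-operator identity follows identically. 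Next, for the a priori estimates (the analogue of Lemma~\ref{lem2}), the same integration by parts in the variable $y_1$ applied to the symbol $\sigma^c(y,\kappa_2\hat x)$ (respectively $\sigma^r$) produces the bounds \eqref{b16}--\eqref{b19} with the exponent $\frac{d-3}{2}$ replaced throughout by $\frac{d-7}{2}$, i.e. $\lesssim \kappa_1^{\frac{d-7}{2}}\kappa_2^{\frac{d-7}{2}-m}(1+|\kappa_1-\kappa_2|)^{-N}$ and so on. No new ideas are required here: the symbol bound \eqref{b21} and the compact support of $\sigma^\eta(\cdot,\xi)$ in $D$ are unchanged, so the split into $|\kappa_1-\kappa_2|<1$ and $|\kappa_1-\kappa_2|\ge1$ carries over directly.

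With these two ingredients in hand, I would reproduce the ergodicity argument of Theorem~\ref{thm3} without change. Defining the auxiliary centered process $Y(\hat x,\kappa)$ with the normalization $\kappa^{m+7-d}$, decomposing $u^\infty(\hat x,\kappa+\tau)\overline{u^\infty(\hat x,\kappa)}$ into the squares of the real/imaginary-part combinations in $\Gamma_1\cup\Gamma_2$, and invoking Gaussianity together with \cite[Lemma~4.2]{CHL}, it again suffices to verify a decay bound of the form $|\mathbb E[X_\kappa X_{\kappa+t}]|\le C_\tau(1+|t-\eta|)^{-\beta}$ so that \cite[Theorem~4.1]{CHL} and \cite[Lemma~6]{LLW2} apply. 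The key point, and the only computation worth checking, is that after multiplying the biharmonic a priori bound by the normalization $\kappa^{\frac{m+7-d}{2}}(\kappa+t)^{\frac{m+7-d}{2}}$, the powers of $\kappa$ and $\kappa+t$ collapse to exactly $\big(\frac{\kappa}{\kappa+t}\big)^{m/2}(1+t)^{-N}$ --- the \emph{same} expression as in the acoustic estimate \eqref{b37}. Hence the bounds \eqref{b37} and \eqref{b39}, and therefore the admissible choice of $N$ (finite, depending only on $m$), transfer verbatim, yielding \eqref{c7}--\eqref{c8} almost surely. The enlarged range $m\in(d-6,d]$ causes no difficulty: since $N$ may be taken arbitrarily large, the extra negative values of $m$ permitted for biharmonic waves are absorbed by the $m\wedge0$ bookkeeping already present in the acoustic proof.

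Finally, since $a^c$ and $a^r$ lie in $C_0^\infty(D)$ and are therefore analytic, their Fourier transforms are real-analytic, so knowledge of $\widehat{a^c}(\tau\hat x)$ and $\widehat{a^r}(\tau\hat x)$ on any open set $\Theta\subset\mathbb R_+\times\mathbb S^{d-1}$ determines $\widehat{a^c}$ and $\widehat{a^r}$ everywhere, and hence $a^c$ and $a^r$ uniquely. I do not anticipate any genuine obstacle: the entire content is the bookkeeping that the extra factor $\kappa^{-4}$ and constant $\frac14$ in the biharmonic far-field pattern are absorbed by the modified normalization $\kappa^{m+7-d}$, after which the reduced correlation estimate is literally identical to the acoustic one and the ergodicity machinery applies unchanged.
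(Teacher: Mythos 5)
Your proposal is correct and follows essentially the same route as the paper: the paper's own proof likewise redoes the expectation computation with the biharmonic prefactor $\kappa^{\frac{d-7}{2}}$ and constant $\frac{C_d}{2}$, notes $1\ge\lim_{Q\to\infty}\frac1Q\int_Q^{2Q}(\frac{\kappa}{\kappa+\tau})^{\frac{7-d}{2}}d\kappa\ge1$, and then invokes the acoustic a priori estimates (Lemma \ref{lem2}) and the ergodicity argument of Theorem \ref{thm3} verbatim, exactly the bookkeeping you carry out, including the observation that the normalization $\kappa^{\frac{m+7-d}{2}}(\kappa+t)^{\frac{m+7-d}{2}}$ collapses the correlation bound to $(\frac{\kappa}{\kappa+t})^{m/2}(1+t)^{-N}$ as in \eqref{b37}. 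One cosmetic remark: it is $\widehat{a^c}$ and $\widehat{a^r}$ that are (real-)analytic by compact support of $a^c,a^r$, not $a^c,a^r$ themselves; your conclusion uses the correct fact, so this is only a wording slip.
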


\begin{proof}
A simple calculation yields 
\begin{eqnarray*}
&&{\mathbb E}\left[u^{\infty}(\hat{x},\kappa+\tau)\overline{u^{\infty}(\hat{x},\kappa)}\right]\notag\\
&=&\frac14|C_d|^2(\kappa+\tau)^{\frac{d-7}2}\kappa^{\frac{d-7}2}\int_{\mathbb R^d}\int_{\mathbb R^d} e^{-{\rm i}(\kappa+\tau)\hat{x}\cdot y}e^{{\rm i}\kappa\hat{x}\cdot z}{\mathbb E}\left[f(y)\overline{f(z)}\right]dydz\notag\\
&=&\frac14|C_d|^2(\kappa+\tau)^{\frac{d-7}2}\kappa^{\frac{d-7}2}\int_{\mathbb R^d}\left[\int_{\mathbb R^d}K_f^c(y,z)e^{-{\rm i}\kappa\hat{x}\cdot (y-z)}dz\right]e^{-{\rm i}\tau\hat{x}\cdot y}dy\notag\\
& =&\frac14|C_d|^2(\kappa+\tau)^{\frac{d-7}2}\kappa^{\frac{d-7}2}\left[\int_{\mathbb R^d}a^c(y)e^{-{\rm i}\tau\hat{x}\cdot y}dy|\kappa\hat{x}|^{-m}+\int_{\mathbb R^d}b^c(y,\kappa\hat{x})e^{-{\rm i}\tau\hat{x}\cdot y}dy\right]\notag\\
& =&\frac14|C_d|^2\left(\frac{\kappa}{\kappa+\tau}\right)^{\frac{7-d}2}\kappa^{d-7-m}\widehat{a^c}(\tau\hat{x})+O(\kappa^{d-8-m}),
\end{eqnarray*}
which gives 
\[
\lim_{\kappa\to\infty}\kappa^{m+7-d}\mathbb E\left[u^{\infty}(\hat{x},\kappa+\tau)\overline{u^{\infty}(\hat{x},\kappa)}\right]=\frac14|C_d|^2\widehat{a^c}(\tau\hat{x}).
\]

Using 
\[
1\ge\lim_{Q\to\infty}\frac1Q\int_Q^{2Q}\left(\frac{\kappa}{\kappa+\tau}\right)^{\frac{7-d}2}d\kappa\ge\lim_{Q\to\infty}\frac1Q\int_Q^{2Q}\left(\frac{Q}{Q+\tau}\right)^{\frac{7-d}2}d\kappa=1
\]
and the estimates of the correlations of the far-field pattern $u^\infty$ at different frequencies, which can be obtained by following the same procedure as the one used in Lemma \ref{lem2}, we may replace the high frequency limit in the above result by the limit of the averaged data over the frequency band at a single sample path with probability one, i.e., 
\[
\lim_{Q\to\infty}\frac1Q\int_Q^{2Q}\kappa^{m+7-d}u^{\infty}(\hat{x},\kappa+\tau)\overline{u^{\infty}(\hat{x},\kappa)}d\kappa=\frac14|C_d|^2\widehat{a^c}(\tau\hat{x}). 
\]
The recovery formula \eqref{c8} can be obtained similarly and the details are omitted for brevity. 
\end{proof}

\section{Electromagnetic waves}

This section is concerned with the direct and inverse source problems for electromagnetic waves. The inverse random source problem for Maxwell's equations was considered in \cite{LW2}, where the source was assumed to be a centered GMIG random vector field whose real and imaginary parts were independent and identically distributed. Under this assumption, the relation operator of the random source vanishes, and the random source is only determined by its covariance operator. The strength matrix of the covariance operator was proved to be uniquely determined by the phased near-field data of the electric field. 

In this work, we remove the assumption that the real and imaginary parts of the random source are independent and identically distributed, and investigate the recovery of the strengths of both the covariance and relation operators for the random source from the far-field pattern of the electric field.

Consider the stochastic Maxwell's equations 
\begin{eqnarray}\label{d1}
\nabla\times {\bm E}-{\rm i}\kappa {\bm H}=0,\qquad \nabla\times{\bm H}+{\rm i}\kappa {\bm E}={\bm f}\quad{\rm in}\;\;{\mathbb R^3},
\end{eqnarray}
where ${\bm E}$ and ${\bm H}$ are the electric and magnetic fields, respectively, and the random source ${\bm f}$ represents the electric current density satisfying the following assumption with $d=3$.

\begin{assumption}\label{as2}
The electric current density ${\bm f}$ is assumed to be a $\mathbb C^d$-valued centered GMIG random
field of order $-m$ in a bounded domain $D\subset\mathbb R^d$. The principal symbols of its covariance operator $\mathcal C_{\bm f}$ and relation operator $\mathcal R_{\bm f}$ have the forms 
$A^c(x)|\xi|^{-m}$ and $A^r(x)|\xi|^{-m}$, respectively, where $A^c, A^r\in C_0^{\infty}(D;{\mathbb C}^{d\times d})$.
\end{assumption}

As usual, an appropriate radiation condition is required for \eqref{d1}. Note that ${\bm f}\in\bm{\mathcal{D}}'(\mathbb R^3;\mathbb C^3)$ is a distribution, and hence (\ref{d1}) is interpreted in the sense of distributions. In \cite{LW2}, 
the following weak Silver--M\"{u}ller radiation condition was proposed to the electromagnetic fields: 
\begin{eqnarray}\label{d2}
\lim_{r\to\infty}\int_{|x|=r}\left({\bm H}\times\hat{x}-{\bm E}\right)\cdot{\bm \phi}ds=0\qquad\forall~{\bm \phi}\in\bm{\mathcal{D}}(\mathbb R^3). 
\end{eqnarray}

In addition to Assumption \ref{as2} with $m\in(2,3]$, $\bm f$ is required to be a distribution belonging to the space
\begin{eqnarray*}\label{d3}
{\mathbb X}:=\left\{{\bm U}\in \bm{\mathcal{D}}'(\mathbb R^3): \int_{\mathbb R^3}{\bm U}\cdot\left(\nabla(\nabla\cdot{\bm\phi})\right)dx=0\quad\forall~{\bm \phi}\in\bm{\mathcal{D}}(\mathbb R^3)\right\}.
\end{eqnarray*}
Apparently, the space $\mathbb X$ is non-empty: if $\bm f$ is smooth enough and divergence-free, then $\bm f\in\mathbb X$. In fact, $\mathbb X$ can be regarded as the space of all distributions which are divergence-free in the sense of distributions. 
The weak divergence-free condition ensures that Maxwell's equations \eqref{d1} can be reduced to the Helmholtz equation and the electric field has an integral representation in terms of the source, where the integral kernel is exactly the fundamental solution to the Helmholtz equation. The details can be found in \cite{LW2}. 

The following result concerns the well-posedness of \eqref{d1}--\eqref{d2} with a relaxed assumption on the order $m$ of the random source.

\begin{theorem}%\label{thm6}
Let $\bm f\in\mathbb X$ satisfy Assumption \ref{as2} with $m\in(-1,3]$.
The problem (\ref{d1})--(\ref{d2}) admits a unique solution $(\bm E,\bm H)$ with $\bm E\in\mathbb X\cap\bm W^{\gamma,q}_{loc}(\mathbb R^3)$ and $\bm H\in(\bm W^{-\gamma,p}(curl))'$ almost surely for $q>1$, $0<\gamma<\min\left\{\frac{1+m}2,\frac m2+\frac3q-1\right\}$ and $p$ satisfying $\frac1p+\frac1q=1$. Moreover, the electric field has the form
\begin{eqnarray*}\label{d4}
{\bm E}(x,\kappa)={\rm i}\kappa\int_{\mathbb R^3}\Phi_3(x,y,\kappa){\bm f}(y)dy.
\end{eqnarray*}
\end{theorem}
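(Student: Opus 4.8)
The plan is to reduce Maxwell's equations \eqref{d1} to a vector Helmholtz equation and then apply Theorem~\ref{thm1} componentwise, following the strategy of \cite{LW2} but now exploiting the sharper scalar regularity available in the present work.

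First I would eliminate the magnetic field. The first equation in \eqref{d1} gives $\bm H=({\rm i}\kappa)^{-1}\nabla\times\bm E$ in the sense of distributions, and substituting this into the second equation yields $\nabla\times\nabla\times\bm E-\kappa^2\bm E={\rm i}\kappa\bm f$. Taking the divergence of the second equation in \eqref{d1} and using $\nabla\cdot(\nabla\times\bm H)=0$ together with $\bm f\in\mathbb X$ shows $\nabla\cdot\bm E=0$ in the weak sense, so $\bm E\in\mathbb X$ and the vector identity $\nabla\times\nabla\times\bm E=-\Delta\bm E+\nabla(\nabla\cdot\bm E)$ collapses to $-\Delta\bm E$. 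Hence each component solves the scalar Helmholtz equation
\[
\Delta E_j+\kappa^2E_j=-{\rm i}\kappa f_j,\qquad j=1,2,3.
\]

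Next I would invoke Theorem~\ref{thm1}. Since $\bm f$ satisfies Assumption~\ref{as2} of order $-m$, each component $f_j$ is a $\mathbb C$-valued GMIG field of order $-m$ in $D$, and the deterministic rescaling $-{\rm i}\kappa f_j$ leaves this property intact. For $d=3$ the admissible range $m\in(-1,3]$ is exactly $(d-4,d]$, so Theorem~\ref{thm1} yields both the representation
\[
E_j(x,\kappa)=-\int_{\mathbb R^3}\Phi_3(x,y,\kappa)(-{\rm i}\kappa f_j(y))\,dy={\rm i}\kappa\int_{\mathbb R^3}\Phi_3(x,y,\kappa)f_j(y)\,dy
\]
and the regularity $E_j\in W^{\gamma,q}_{\rm loc}(\mathbb R^3)$ almost surely for $q>1$ and $0<\gamma<\min\{\tfrac{1+m}2,\tfrac m2+\tfrac3q-1\}$; this is precisely the range claimed after specializing the bound in Theorem~\ref{thm1} to $d=3$. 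Stacking the components gives the asserted formula for $\bm E$ and $\bm E\in\mathbb X\cap\bm W^{\gamma,q}_{\rm loc}(\mathbb R^3)$.

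It remains to locate $\bm H$ and to settle uniqueness. I would read $\bm H=({\rm i}\kappa)^{-1}\nabla\times\bm E$ weakly, so that $\langle\bm H,\bm\phi\rangle=({\rm i}\kappa)^{-1}\langle\bm E,\nabla\times\bm\phi\rangle$ for test fields $\bm\phi$; since $\bm E\in\bm W^{\gamma,q}_{\rm loc}$, the right-hand side is a bounded functional of $\bm\phi$ whenever $\bm\phi$ and $\nabla\times\bm\phi$ belong to $\bm W^{-\gamma,p}$ with $p$ conjugate to $q$, which places $\bm H\in(\bm W^{-\gamma,p}(curl))'$. For uniqueness I would set $\bm f=0$, observe that each $E_j$ then solves the homogeneous Helmholtz equation, verify that the weak Silver--M\"uller condition \eqref{d2} forces the Sommerfeld condition \eqref{b2} on each component, and conclude $E_j=0$ from the uniqueness part of Theorem~\ref{thm1}, whence $\bm E=\bm H=0$. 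The genuinely new point relative to \cite{LW2} is the enlarged range of $m$, inherited directly from the sharper scalar estimate in Theorem~\ref{thm1}; the main technical care lies in justifying the formal elimination of $\bm H$ and the passage between the two radiation conditions purely at the level of distributions, a reduction already carried out in \cite{LW2}.
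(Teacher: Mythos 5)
Your proposal is correct and follows essentially the same route as the paper, which simply invokes the reduction of \eqref{d1}--\eqref{d2} to the Helmholtz equation via the weak divergence-free condition (carried out in \cite[Corollary 2.3]{LW2}) and then applies Theorem \ref{thm1}, whose range $m\in(d-4,d]$ with $d=3$ gives exactly $(-1,3]$ and the stated exponent bound $\min\left\{\frac{1+m}{2},\frac{m}{2}+\frac{3}{q}-1\right\}$. Your componentwise elimination of $\bm H$, the divergence argument, and the weak reading of $\bm H=({\rm i}\kappa)^{-1}\nabla\times\bm E$ are precisely the details the paper defers to \cite{LW2}.
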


The proof can be obtained directly from the well-posedness of the Helmholtz equation given in Theorem \ref{thm1} and \cite[Corollary 2.3]{LW2}. The details are omitted here.  

To recover $A^c$ and $A^r$ of the covariance and relation operators for the random source, respectively, we consider the far-field pattern of the electric field
\begin{eqnarray*}\label{d5}
{\bm E}^{\infty}(\hat{x},\kappa)={\rm i}\kappa C_3\int_{\mathbb R^3}e^{-{\rm i}\kappa\hat{x}\cdot y}{\bm f}(y)dy,
\end{eqnarray*}
which is obtained from the asymptotic behavior of the fundamental solution $\Phi_3$ given in \eqref{a25}.
By similar arguments as those for the Helmholtz equation in Theorem \ref{thm3}, we can establish the following uniqueness theorem.

\begin{theorem}%\label{thm7}
Let ${\bm f}\in\mathbb X$ satisfy Assumption \ref{as2} with $m\in \left(-1,3\right]$. Then for all $\hat{x}\in {\mathbb S}^{2}$ and $\tau\geq 0$, it holds almost surely that 
\begin{eqnarray}\label{d6}
&&\lim_{Q\to\infty}\frac{1}{Q}\int_Q^{2Q}\kappa^{m-2}\bm E^{\infty}(\hat{x},\kappa+\tau)\overline{\bm E^{\infty}(\hat{x},\kappa)}^\top d\kappa=\frac{1}{16\pi^2}\widehat{A^c}(\tau\hat{x}),\\\label{d7}
&&\lim_{Q\to\infty}\frac{1}{Q}\int_Q^{2Q}\kappa^{m-2}\bm E^{\infty}(\hat{x},\kappa+\tau)\bm E^{\infty}(-\hat{x},\kappa)^\top d\kappa=-\frac{1}{16\pi^2}\widehat{A^r}(\tau\hat{x}),
\end{eqnarray}
where the Fourier transform of a matrix $A=[a_{jl}]_{j,l=1,\cdots,d}$ is defined by
$\widehat{A}=\left[\widehat{a_{jl}}\right]_{j,l=1,2,3}.$

Moreover, the strength matrices $A^c$ and $A^r$ are uniquely determined by (\ref{d6}) and (\ref{d7}), respectively, with $(\tau, \hat{x})\in\Theta$ and $\Theta\subset\mathbb R_+\times\mathbb S^{2}$ being any open domain.
\end{theorem}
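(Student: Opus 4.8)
The plan is to follow the acoustic argument of Theorem~\ref{thm3} carried out entrywise on the $3\times3$ correlation matrices, exploiting the explicit integral representation of $\bm E^\infty$ to reduce each matrix entry to a scalar problem of exactly the same type. First I would establish the expectation-level identities. Writing $E_j^\infty(\hat x,\kappa)={\rm i}\kappa C_3\int_{\mathbb R^3}e^{-{\rm i}\kappa\hat x\cdot y}f_j(y)\,dy$ and using that $\bm f$ is centered, the $(j,l)$ entry of the correlation matrix is
\[
\mathbb E\left[E_j^\infty(\hat x,\kappa+\tau)\overline{E_l^\infty(\hat x,\kappa)}\right]
=|C_3|^2(\kappa+\tau)\kappa\int_{\mathbb R^3}\left[\int_{\mathbb R^3}(K_{\bm f}^c)_{jl}(y,z)e^{-{\rm i}\kappa\hat x\cdot(y-z)}\,dz\right]e^{-{\rm i}\tau\hat x\cdot y}\,dy.
\]
Applying the kernel--symbol relation \eqref{a22} as in Lemma~\ref{lm2}, the inner integral equals $\Sigma^c_{jl}(y,\kappa\hat x)=A^c_{jl}(y)\kappa^{-m}+B^c_{jl}(y,\kappa\hat x)$, so the leading behaviour is $|C_3|^2(\kappa+\tau)\kappa^{1-m}\widehat{A^c_{jl}}(\tau\hat x)$ with an $O(\kappa^{1-m})$ remainder coming from $B^c\in\mathcal S^{-m-1}$. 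Multiplying by $\kappa^{m-2}$ and letting $\kappa\to\infty$ produces the factor $(\kappa+\tau)/\kappa\to1$ and the value $|C_3|^2\widehat{A^c_{jl}}(\tau\hat x)=\frac1{16\pi^2}\widehat{A^c_{jl}}(\tau\hat x)$, since $C_3=\frac1{4\pi}$. The relation identity is identical, with $\mathbb E[f_j(y)f_l(z)]$ replacing $\mathbb E[f_j(y)\overline{f_l(z)}]$ and the extra factor ${\rm i}\cdot{\rm i}=-1$ accounting for the minus sign in \eqref{d7}.

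Next I would prove the matrix analogue of Lemma~\ref{lem2}: for each $j,l$, every $N\in\mathbb N$ and $\kappa_1,\kappa_2\ge1$,
\[
\left|\mathbb E\left[E_j^\infty(\hat x,\kappa_1)\overline{E_l^\infty(\hat x,\kappa_2)}\right]\right|\lesssim\kappa_1\kappa_2^{1-m}(1+|\kappa_1-\kappa_2|)^{-N},
\]
together with the relation version and the two opposite-direction versions carrying $(1+\kappa_1+\kappa_2)^{-N}$. These follow exactly as in Lemma~\ref{lem2}, reducing the double integral to $\int_D\Sigma^c_{jl}(y,\kappa_2\hat x)e^{{\rm i}(\kappa_2-\kappa_1)\hat x\cdot y}\,dy$, using $|\partial_y^\alpha\Sigma^c_{jl}(y,\kappa_2\hat x)|\lesssim(1+\kappa_2)^{-m}$, and integrating by parts in $y_1$; the opposite-direction bounds are obtained by replacing $\kappa_2$ with $-\kappa_2$. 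Note that here the prefactor $\kappa_1\kappa_2$ takes the place of the acoustic $(\kappa_1\kappa_2)^{(d-3)/2}$, which is the only change in the power counting.

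For the ergodic step I would decompose $E_j^\infty(\hat x,\kappa)=U_j(\hat x,\kappa)+{\rm i}V_j(\hat x,\kappa)$ into real and imaginary parts and expand
\[
E_j^\infty(\hat x,\kappa+\tau)\overline{E_l^\infty(\hat x,\kappa)}
=\big[U_jU_l+V_jV_l\big]+{\rm i}\big[V_jU_l-U_jV_l\big],
\]
where in each product the first factor is evaluated at $\kappa+\tau$ and the second at $\kappa$. Each real product $ab$ of the basic Gaussian fields $U_j,V_j,U_l,V_l$ is written as $\frac14[(a+b)^2-(a-b)^2]$, so that $\kappa^{m-2}(E_j^\infty\overline{E_l^\infty}-\mathbb E[E_j^\infty\overline{E_l^\infty}])$ becomes a finite linear combination of terms $\kappa^{m-2}(W_\kappa^2-\mathbb E W_\kappa^2)$ in which every $W_\kappa$ is a real linear combination of $U_j,V_j,U_l,V_l$ at the shifts $\kappa$ and $\kappa+\tau$. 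By \cite[Theorem 4.1]{CHL} and \cite[Lemma 6]{LLW2} it then suffices to control $|\mathbb E[X_\kappa X_{\kappa+t}]|$ with $X_\kappa=\kappa^{m-2}(W_\kappa^2-\mathbb E W_\kappa^2)$, which by Gaussianity equals $2(\mathbb E[\kappa^{(m-2)/2}(\kappa+t)^{(m-2)/2}W_\kappa W_{\kappa+t}])^2$. Expressing $\mathbb E[W_\kappa W_{\kappa+t}]$ through the estimates just proved, the power counting collapses to
\[
\left|\mathbb E\left[\kappa^{\frac{m-2}2}(\kappa+t)^{\frac{m-2}2}W_\kappa W_{\kappa+t}\right]\right|\lesssim\left(\frac{\kappa}{\kappa+t}\right)^{\frac m2}(1+t)^{-N}\lesssim(1+t)^{-\left(N+\frac m2\wedge0\right)},
\]
exactly as in \eqref{b37}--\eqref{b39}; choosing $N>-m/2$ then yields \eqref{d6}--\eqref{d7} almost surely.

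The hard part will be the middle step: because distinct components $E_j^\infty$ and $E_l^\infty$ appear, the real--imaginary decomposition and the polarization must be organized so that each resulting $W_\kappa$ is genuinely real Gaussian and its two-frequency covariance is governed by the matrix estimates, now including the cross-component correlations $\mathbb E[U_jU_l]$, $\mathbb E[U_jV_l]$, and so on, as well as the non-conjugate correlations that decay in $\kappa_1+\kappa_2$ rather than $|\kappa_1-\kappa_2|$; the latter decay is even faster and hence harmless. Once this bookkeeping is in place, the argument reduces verbatim to \eqref{b37}--\eqref{b39}. Finally, since each entry $A^c_{jl},A^r_{jl}\in C_0^\infty(D)$, its Fourier transform is analytic, so the values $\widehat{A^c_{jl}}(\tau\hat x)$ and $\widehat{A^r_{jl}}(\tau\hat x)$ determined on the open set $\{\tau\hat x:(\tau,\hat x)\in\Theta\}\subset\mathbb R^3$ extend by analytic continuation to all of $\mathbb R^3$, and Fourier inversion recovers $A^c$ and $A^r$ entrywise.
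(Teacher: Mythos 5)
Your proposal is correct and follows essentially the same route as the paper, which proves this theorem precisely by repeating the acoustic argument of Theorem~\ref{thm3} (expectation identities via the kernel--symbol relation \eqref{a22}, matrix analogues of the correlation estimates in Lemma~\ref{lem2}, and the ergodicity criterion from \cite{CHL,LLW2}) applied entrywise to $\bm E^\infty$, with the prefactor ${\rm i}\kappa C_3$ accounting for the normalization $\kappa^{m-2}$, the constant $\frac1{16\pi^2}$, and the sign in \eqref{d7}. Your use of the polarization identity $ab=\frac14[(a+b)^2-(a-b)^2]$ to handle the cross-component products is a cosmetic variant of the paper's decomposition into the squares of the families $\Gamma_1,\Gamma_2$, and your bookkeeping of the cross-component and non-conjugate correlations is exactly what the omitted details require.
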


\section{Elastic waves}

This section is devoted to the direct and inverse random source problems for elastic waves. Consider the stochastic Navier equation in a homogeneous medium
\begin{eqnarray}\label{e1}
\mu\Delta{\bm u}+(\lambda+\mu)\nabla\nabla\cdot{\bm u}+\omega^2{\bm u}={\bm f}\quad{\rm in}~{\mathbb R^d},
\end{eqnarray}
where $\omega>0$ is the angular frequency, ${\bm u}\in {\mathbb C^d}$ is the displacement,  $\lambda$ and $\mu$ denote the Lam\'{e} parameters satisfying $\mu>0$ and $\lambda+2\mu>0$ such that the second order partial differential operator $\Delta^*:=\mu\Delta+(\lambda+\mu)\nabla\nabla\cdot$ is strongly elliptic (cf. \cite[section 10.4]{M}), 
and the source $\bm f$ is assumed to be a $\mathbb C^d$-valued GMIG random field satisfying Assumption \ref{as2} with some restrictions on $m$ to be given later.

By the Helmholtz decomposition \cite[Appendix B]{BLZ}, the displacement ${\bm u}$ outside the support $D$ of the random source can be decomposed as $\bm u=\bm u_p+\bm u_s$, where the compressional and shear parts $\bm u_p$ and $\bm u_s$ are defined by
\begin{equation*}%\label{eq:u_ps}
{\bm u}_p:=-\frac{1}{\kappa_p^2}\nabla\nabla\cdot {\bm u},\quad
{\bm u}_s:=\frac{1}{\kappa_s^2}\nabla\times(\nabla\times{\bm u})\quad{\rm in}~\mathbb R^d\setminus\overline{D},
\end{equation*}
where, for $d=3$, `$\nabla\times$' denotes the classical curl operator; for $d=2$, `$\nabla\times(\nabla\times\cdot)$' is defined by
\[
\nabla\times(\nabla\times\bm u):=\left(\partial_{x_1}\partial_{x_2}u_2-\partial^2_{x_2}u_1,\partial_{x_1}\partial_{x_2}u_1-\partial^2_{x_1}u_2\right)^\top
\]
for $\bm u=(u_1,u_2)^\top$. The Kupradze--Sommerfeld radiation condition requires that ${\bm u}_p$ and ${\bm u}_s$ satisfy the Sommerfeld radiation condition 
\begin{eqnarray}\label{e2}
\lim_{|x|\to\infty}|x|^{\frac{d-1}{2}}\left(\partial_{|x|}{\bm u}_p-{\rm i}\kappa_p{\bm u}_p\right)=0,\quad\lim_{|x|\to\infty}|x|^{\frac{d-1}{2}}\left(\partial_{|x|}{\bm u}_s-{\rm i}\kappa_s{\bm u}_s\right)=0.
\end{eqnarray}

The well-posedness of the problem (\ref{e1})--(\ref{e2}) was investigated in \cite{LW3}, where the random source $\bm f$ was assumed to be $\mathbb R^d$-valued satisfying Assumption \ref{as2} with $m\in(d-1,d]$. The following result gives the well-posedness of the problem (\ref{e1})--(\ref{e2}) with a $\mathbb C^d$-valued random source and a relaxed condition on the order $m$.  

\begin{theorem}%\label{thm8}
Let ${\bm f}$ satisfy Assumption \ref{as2} with $m\in (d-4,d]$. The problem (\ref{e1})--(\ref{e2}) admits a unique solution ${\bm u}\in {\bm W}_{\rm loc}^{\gamma, q}(\mathbb R^d)$ almost surely given by 
\begin{eqnarray}\label{e4}
{\bm u}(x,\omega)=-\int_{\mathbb R^d}{\bm G}_d(x,y,\omega){\bm f}(y)dy
\end{eqnarray}
for any $q>1$ and $0<\gamma<\min\left\{\frac{4-d+m}2,\frac{4-d+m}2+\left(\frac1q-\frac12\right)d\right\}$. 
\end{theorem}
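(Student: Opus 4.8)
The plan is to follow the proof of Theorem~\ref{thm1} verbatim at the structural level, replacing the scalar Helmholtz volume potential by its elastic analogue and using the strong ellipticity of $\Delta^*$ to recover the same order $-2$ smoothing. First I would record the regularity of the source. By Assumption~\ref{as2} every component of $\bm f$ is a $\mathbb C$-valued GMIG field of order $-m$ with $m\le d$, so Lemma~\ref{lem1}(ii) gives $\bm f\in\bm W^{\frac{m-d}2-\epsilon,p}(D)$ almost surely for every $\epsilon>0$ and $p>1$; the Kondrachov embedding then yields $\bm f\in\bm H^{-s_1}(D)$ for any $s_1\in(\frac{d-m}2,2)$, exactly as in the acoustic case.

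Next I would introduce the elastic volume potential $\bm{\mathcal G}_\omega\bm f(x):=-\int_{\mathbb R^d}\bm G_d(x,y,\omega)\bm f(y)\,dy$ and prove that $\bm{\mathcal G}_\omega:\bm H^{-s_1}(D)\to\bm H^{s_2}(G)$ is bounded whenever $s:=s_1+s_2\in(0,2]$, for any bounded Lipschitz domain $G$. Reproducing the device of \cite[Lemma~3.1]{LW3} used in Theorem~\ref{thm1}, I would equip $\bm C^{0,\alpha}(D)$ and $\bm C^{2,\alpha}(G)$ with the inner products induced by the $\bm H^{s_2-2}(\mathbb R^d)$ and $\bm H^{s_2}(\mathbb R^d)$ norms of the zero extensions, and reduce the claim to the a priori bound $\|\bm{\mathcal G}_\omega\bm f\|_{\bm C^{2,\alpha}(G)}\lesssim\|\bm f\|_{\bm C^{0,\alpha}(D)}$. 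Since $\bm u=\bm{\mathcal G}_\omega\bm f$ solves the Navier equation \eqref{e1} together with the Kupradze--Sommerfeld radiation condition \eqref{e2}, this is exactly the interior Schauder estimate for the strongly elliptic operator $\Delta^*$, which is available because $\mu>0$ and $\lambda+2\mu>0$. Combined with the embedding $\bm H^{-s_1}(D)\hookrightarrow\bm H^{s_2-2}(D)$, valid precisely when $s\le2$, this gives the asserted boundedness.

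Finally, choosing $s_2=\frac{4-d+m}2-\epsilon$ and invoking the Sobolev embedding $\bm H^{s_2}(G)\hookrightarrow\bm W^{\gamma,q}(G)$ for the stated range of $\gamma$ and $q$ produces $\bm u\in\bm W^{\gamma,q}_{\rm loc}(\mathbb R^d)$ almost surely, which is the regularity claim; note that the exponents coincide with those of Theorem~\ref{thm1} because the Navier operator, like the Helmholtz operator, is of second order and hence smooths by two derivatives. Uniqueness and the representation \eqref{e4} then follow from the well-posedness theory of \cite{LW3}, the integral against the Green tensor being the unique distributional solution of \eqref{e1}--\eqref{e2}.

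The step I expect to be the main obstacle is the Schauder bound $\bm C^{0,\alpha}(D)\to\bm C^{2,\alpha}(G)$ for $\bm{\mathcal G}_\omega$. The subtlety is the term $\frac1{\omega^2}\nabla_x\nabla_x^\top[\Phi_d(\cdot,\cdot,\kappa_s)-\Phi_d(\cdot,\cdot,\kappa_p)]$ in the Green tensor, which at first sight destroys two derivatives. The two lost derivatives are regained because the leading, $\kappa$-independent singularities of the two Helmholtz fundamental solutions cancel in the difference, rendering $\Phi_d(\cdot,\cdot,\kappa_s)-\Phi_d(\cdot,\cdot,\kappa_p)$ an order $-4$ kernel---the same cancellation that underlies the biharmonic fundamental solution $F_d$---so that $\nabla_x\nabla_x^\top$ restores an order $-2$ kernel. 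Equivalently, and more cleanly, the strong ellipticity of $\Delta^*$ directly yields the full two-derivative interior regularity of $\bm u$, which is precisely what the Schauder estimate encodes.
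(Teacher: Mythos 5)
Your proposal takes essentially the same route as the paper: the paper's (brief) proof likewise observes that the Green tensor $\bm G_d$ has the same order $-2$ singularity as $\Phi_d$ (exactly because the leading singularities cancel in the difference $\Phi_d(\cdot,\cdot,\kappa_s)-\Phi_d(\cdot,\cdot,\kappa_p)$ before $\nabla_x\nabla_x^\top$ is applied), so that the elastic volume potential is bounded from $\bm H^{-s_1}(D)$ to $\bm H^{s_2}(G)$ for $s_1+s_2\in(0,2]$, and then repeats the argument of Theorem \ref{thm1} verbatim. Your additional justifications---the biharmonic-type cancellation and the Schauder estimate afforded by the strong ellipticity of $\Delta^*$---are correct elaborations of what the paper leaves implicit.
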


Similar to the operator $\mathcal H_\kappa$ defined in Theorem \ref{thm1}, the operator 
\[
(\bm{\mathcal H}_\omega \bm f)(x):=-\int_{\mathbb R^d}\bm G_d(x,y,\omega)\bm f(y)dy
\]
generated by the Green tensor $\bm G_d$ is also bounded from $\bm H^{-s_1}(D)$ to $\bm H^{s_2}(G)$ with $s_1,s_2\ge0$ and $s_1+s_2\in(0,2]$, since the Green tensor $\bm G_d$ has the same singularity as the fundamental solution $\Phi_d$ to the Helmholtz equation. Hence, the proof of the above theorem can be obtained following the same procedure as the one used in Theorem \ref{thm1}, and is omitted here.

Based on the asymptotic behavior of the Green tensor $\bm G_d$ given in \eqref{a31}, we can rewrite $\bm u$ in \eqref{e4} as the following asymptotic expansion 
\begin{eqnarray*}%\label{e5}
{\bm u}(x,\omega)=\frac{e^{{\rm i}\kappa_p|x|}}{|x|^{\frac{d-1}{2}}}{\bm u}^{\infty}_p(\hat{x},\omega)+\frac{e^{{\rm i}\kappa_s|x|}}{|x|^{\frac{d-1}{2}}}{\bm u}^{\infty}_s(\hat{x},\omega)+O\left(|x|^{-\frac{d+1}{2}}\right), \quad |x|\to\infty,
\end{eqnarray*}
where 
\begin{eqnarray}\label{e6}
&&{\bm u}_p^{\infty}(\hat{x},\omega)=-C_dc_p^{\frac{d+1}2}\omega^{\frac{d-3}2}\hat{x}\hat{x}^\top \int_{\mathbb R^d}e^{-{\rm i}\kappa_p\hat{x}\cdot y}{\bm f}(y)dy,\\\label{e7}
&&{\bm u}_s^{\infty}(\hat{x},\omega)=-C_dc_s^{\frac{d+1}2}\omega^{\frac{d-3}2}\left({\bm I}-\hat{x}\hat{x}^\top \right)\int_{\mathbb R^d}e^{-{\rm i}\kappa_s\hat{x}\cdot y}{\bm f}(y)dy
\end{eqnarray}
are known as the compressional and shear far-field patterns of the scattered field ${\bm u}$, respectively. 
Due to the presence of matrices $\hat{x}\hat{x}^\top $ and ${\bm I}-\hat{x}\hat{x}^\top $ in \eqref{e6}--\eqref{e7}, each component of ${\bm u}_p^{\infty}(\hat{x},\omega)$ and ${\bm u}_s^{\infty}(\hat{x},\omega)$ consists of combinations of all components of the random source ${\bm f}$, which makes it more complicated than the cases for acoustic waves, biharmonic waves, and electromagnetic waves. 

Define vectors 
\begin{eqnarray*}%\label{e8}
{\bm v}_{p,j}:=\hat{x}_j\hat{x},\quad {\bm v}_{s,j}:={\bm e}_j-{\bm v}_{p,j},\quad j=1,\cdots,d,
\end{eqnarray*}
where $\hat{x}=(\hat{x}_1,\cdots,\hat{x}_d)^{\top}$ and $\bm e_j$ is the unit vector in $\mathbb R^d$ with its $j$th entry being one. For any matrix $A=[a_{jl}]_{j,l=1,\cdots,d}$, define a reshape operator ${\mathscr R}:\mathbb R^{d\times d}\to\mathbb R^{d^2}$ by 
\[
\mathscr R(A):=(a_{11},\cdots,a_{1d},\cdots,a_{d1},\cdots,a_{dd})^\top,
\]
which rearranges the entries of matrix $A$ in rows into a vector. 
Let ${\bm u}_p^{\infty}=(u_{p,1}^\infty,\cdots,u_{p,d}^\infty)^\top$ and ${\bm u}_s^{\infty}=(u_{s,1}^\infty,\cdots,u_{s,d}^\infty)^\top$. Then, according to \eqref{e6}--\eqref{e7}, we get the following expressions for components of $\bm u_p^\infty$ and $\bm u_s^\infty$:
\begin{eqnarray}\label{e9}
&&u_{p,j}^{\infty}(\hat{x},\omega)=-C_dc_p^{\frac{d+1}2}\omega^{\frac{d-3}2}\int_{\mathbb R^d}e^{-{\rm i}c_p\omega\hat{x}\cdot y}{\bm v}_{p,j}\cdot{\bm f}(y)dy,\\\label{e10}
&&u_{s,j}^{\infty}(\hat{x},\omega)=-C_dc_s^{\frac{d+1}2}\omega^{\frac{d-3}2}\int_{\mathbb R^d}e^{-{\rm i}c_s\omega\hat{x}\cdot y}{\bm v}_{s,j}\cdot{\bm f}(y)dy.
\end{eqnarray}

Based on the relationship between the kernel and the symbol of the covariance operator given in (\ref{a22}) as well as \eqref{e9}, we obtain 
\begin{eqnarray*}
&&{\mathbb E}\left[u_{p,j}^{\infty}\left(\hat{x},c_sc_p^{-1}(\omega+\tau)\right)\overline{u_{p,l}^{\infty}\left(\hat{x},c_sc_p^{-1}\omega\right)}\right]\\
&&=|C_d|^2c_p^4c_s^{d-3}(\omega+\tau)^{\frac{d-3}2}\omega^{\frac{d-3}2}\int_{\mathbb R^d}\int_{\mathbb R^d}e^{-{\rm i}c_s(\omega+\tau)\hat{x}\cdot y}e^{{\rm i}c_s\omega\hat{x}\cdot z}{\mathbb E}\left[{\bm v}_{p,j}\cdot {\bm f(y)} {\bm v}_{p,l}\cdot\overline{{\bm f}(z)}\right]dydz\\
&&=|C_d|^2c_p^4c_s^{d-3}(\omega+\tau)^{\frac{d-3}2}\omega^{\frac{d-3}2}\int_{\mathbb R^d}\int_{\mathbb R^d}e^{-{\rm i}c_s(\omega+\tau)\hat{x}\cdot y}e^{{\rm i}c_s\omega\hat{x}\cdot z}{\mathbb E}\left[\mathscr R ({\bm v}_{p,j}{\bm v}_{p,l}^{\top})\cdot\mathscr R ({\bm {f}(y)}\overline{{\bm f}(z)}^{\top})\right]dydz\\
&&=|C_d|^2c_p^4c_s^{d-3}(\omega+\tau)^{\frac{d-3}2}\omega^{\frac{d-3}2}\int_{\mathbb R^d}\bigg[\int_{\mathbb R^d}e^{-{\rm i}c_s\omega\hat{x}\cdot(y-z)}\mathscr R ({\bm v}_{p,j}{\bm v}_{p,l}^{\top})\cdot\mathscr R K_{\bm f}^c(y,z)dz\bigg]e^{-{\rm i}c_s\tau\hat{x}\cdot y}dy\\
&&=|C_d|^2c_p^4c_s^{d-3}(\omega+\tau)^{\frac{d-3}2}\omega^{\frac{d-3}2}\bigg[\int_{\mathbb R^d}\mathscr R ({\bm v}_{p,j}{\bm v}_{p,l}^{\top})\cdot\mathscr R A^c(y)e^{-{\rm i}c_s\tau\hat{x}\cdot y}dy|c_s\omega\hat{x}|^{-m}+O(\omega^{-m-1})\bigg]\\
&&=|C_d|^2c_p^4c_s^{d-3-m}\left(\frac{\omega}{\omega+\tau}\right)^{\frac{3-d}2}\mathscr R ({\bm v}_{p,j}{\bm v}_{p,l}^{\top})\cdot\mathscr R \widehat{A^c}(c_s\tau\hat{x})\omega^{d-3-m}+O(\omega^{d-4-m}),
\end{eqnarray*}
which leads to 
\begin{eqnarray}\nonumber
&&\lim_{Q\to\infty}\frac{1}{Q}\int_Q^{2Q}\omega^{m+3-d}c_s^{m+3-d}c_p^{-4}{\mathbb E}\left[u_{p,j}^{\infty}\left(\hat{x},c_sc_p^{-1}(\omega+\tau)\right)\overline{u_{p,l}^{\infty}\left(\hat{x},c_sc_p^{-1}\omega\right)}\right]d\omega\\\label{e12}
&&\qquad\qquad=|C_d|^2\mathscr R ({\bm v}_{p,j}{\bm v}_{p,l}^{\top})\cdot\mathscr R \widehat{A^c}(c_s\tau\hat{x}).
\end{eqnarray}
Similarly, by noting that
\begin{eqnarray*}
&&{\mathbb E}\left[u_{p,j}^{\infty}\left(\hat{x},c_sc_p^{-1}(\omega+\tau)\right)\overline{u_{s,l}^{\infty}\left(\hat{x},\omega\right)}\right]\\
&&=|C_d|^2c_p^2c_s^{d-1}(\omega+\tau)^{\frac{d-3}2}\omega^{\frac{d-3}2}\int_{\mathbb R^d}\int_{\mathbb R^d}e^{-{\rm i}c_s(\omega+\tau)\hat{x}\cdot y}e^{{\rm i}c_s\omega\hat{x}\cdot z}{\mathbb E}\left[{\bm v}_{p,j}\cdot {\bm f(y)} {\bm v}_{s,l}\cdot\overline{{\bm f}(z)}\right]dydz\\
&&=|C_d|^2c_p^2c_s^{d-1}(\omega+\tau)^{\frac{d-3}2}\omega^{\frac{d-3}2}\bigg[\int_{\mathbb R^d}\mathscr R ({\bm v}_{p,j}{\bm v}_{s,l}^{\top})\cdot\mathscr R A^c(y)e^{-{\rm i}c_s\tau\hat{x}\cdot y}dy|c_s\omega\hat{x}|^{-m}+O(\omega^{-m-1})\bigg]\\
&&=|C_d|^2c_p^2c_s^{d-1-m}\left(\frac{\omega}{\omega+\tau}\right)^{\frac{3-d}2}\mathscr R ({\bm v}_{p,j}{\bm v}_{s,l}^{\top})\cdot\mathscr R \widehat{A^c}(c_s\tau\hat{x})\omega^{d-3-m}+O(\omega^{d-4-m}),
\end{eqnarray*}
we have
\begin{eqnarray}\nonumber
&&\lim_{Q\to\infty}\frac{1}{Q}\int_Q^{2Q}\omega^{m+3-d}c_s^{m+1-d}c_p^{-2}{\mathbb E}\left[u_{p,j}^{\infty}\left(\hat{x},c_sc_p^{-1}(\omega+\tau)\right)\overline{u_{s,l}^{\infty}\left(\hat{x},\omega\right)}\right]d\omega\\\label{e13}
&&\qquad=|C_d|^2\mathscr R ({\bm v}_{p,j}{\bm v}_{s,l}^{\top})\cdot\mathscr R \widehat{A^c}(c_s\tau\hat{x}).
\end{eqnarray}

Following the same procedure as above, we may get the limit of the following correlations:
\begin{eqnarray}\nonumber
&&\lim_{Q\to\infty}\frac{1}{Q}\int_Q^{2Q}\omega^{m+3-d}c_s^{m+1-d}c_p^{-2}{\mathbb E}\left[u_{s,j}^{\infty}\left(\hat{x},\omega+\tau\right)\overline{u_{p,l}^{\infty}\left(\hat{x},c_sc_p^{-1}\omega\right)}\right]d\omega\\\label{e14}
&&\qquad=|C_d|^2\mathscr R ({\bm v}_{s,j}{\bm v}_{p,l}^{\top})\cdot\mathscr R \widehat{A^c}(c_s\tau\hat{x}),\\\nonumber
&&\lim_{Q\to\infty}\frac{1}{Q}\int_Q^{2Q}\omega^{m+3-d}c_s^{m-1-d}{\mathbb E}\left[u_{s,j}^{\infty}\left(\hat{x},\omega+\tau\right)\overline{u_{s,l}^{\infty}\left(\hat{x},\omega\right)}\right]d\omega\\\label{e15}
&&\qquad=|C_d|^2\mathscr R ({\bm v}_{s,j}{\bm v}_{s,l}^{\top})\cdot\mathscr R \widehat{A^c}(c_s\tau\hat{x}).
\end{eqnarray}
Note that the coefficients in \eqref{e12}--\eqref{e15} satisfy
\begin{eqnarray*}
&&\mathscr R \left({\bm v}_{p,j}{\bm v}_{p,l}^{\top}+{\bm v}_{p,j}{\bm v}_{s,l}^{\top}+{\bm v}_{s,j}{\bm v}_{p,l}^{\top}+{\bm v}_{s,j}{\bm v}_{s,l}^{\top}\right)\\
&=&\mathscr R \left({\bm v}_{p,j}{\bm v}_{p,l}^{\top}+{\bm v}_{p,j}{\bm v}_{s,l}^{\top}+({\bm e}_j-{\bm v}_{p,j}){\bm v}_{p,l}^{\top}+({\bm e}_j-{\bm v}_{p,j}){\bm v}_{s,l}^{\top}\right)\\
&=&\mathscr R \left({\bm e}_j{\bm v}_{p,l}^{\top}+{\bm e}_j(\bm e_l-{\bm v}_{p,l})^{\top}\right)\\
&=&\mathscr R ({\bm e}_j{\bm e}_l^{\top}),
\end{eqnarray*}
which yields
\begin{eqnarray*}
\mathscr R ({\bm v}_{p,j}{\bm v}_{p,l}^{\top}+{\bm v}_{s,j}{\bm v}_{p,l}^{\top}+{\bm v}_{s,j}{\bm v}_{s,l}^{\top}+{\bm v}_{p,j}{\bm v}_{s,l}^{\top})\cdot \mathscr R \widehat{A^c}(c_s\tau\hat{x})=\widehat{a_{jl}^c}(c_s\tau\hat{x}).
\end{eqnarray*}
Adding (\ref{e12})--(\ref{e15}), we derive that 
\begin{align*}\nonumber
\lim_{Q\to\infty}\frac{1}{Q}\int_Q^{2Q}&\omega^{m+3-d}c_s^{m+3-d}\bigg\{c_p^{-4}{\mathbb E}\left[u_{p,j}^{\infty}\left(\hat{x},c_sc_p^{-1}(\omega+\tau)\right)\overline{u_{p,l}^{\infty}\left(\hat{x},c_sc_p^{-1}\omega\right)}\right]\\\nonumber
&+c_s^{-2}c_p^{-2}{\mathbb E}\left[u_{p,j}^{\infty}\left(\hat{x},c_sc_p^{-1}(\omega+\tau)\right)\overline{u_{s,l}^{\infty}(\hat{x},\omega)}\right]\\
&+c_s^{-2}c_p^{-2}{\mathbb E}\left[u_{s,j}^{\infty}\left(\hat{x},\omega+\tau\right)\overline{u_{p,l}^{\infty}\left(\hat{x},c_sc_p^{-1}\omega\right)}\right]\\
&+c_s^{-4}{\mathbb E}\left[u_{s,j}^{\infty}\left(\hat{x},\omega+\tau\right)\overline{u_{s,l}^{\infty}\left(\hat{x},\omega\right)}\right]\bigg\}d\omega=|C_d|^2\widehat{a_{jl}^c}(c_s\tau\hat{x})
\end{align*}
for $j,l=1,\cdots,d$, which can be rewritten into a compact form
\begin{align}\nonumber
\lim_{Q\to\infty}\frac{1}{Q}\int_Q^{2Q}&\omega^{m+3-d}c_s^{m+3-d}\bigg\{c_p^{-4}{\mathbb E}\left[\bm u_{p}^{\infty}\left(\hat{x},c_sc_p^{-1}(\omega+\tau)\right)\overline{\bm u_{p}^{\infty}\left(\hat{x},c_sc_p^{-1}\omega\right)}^\top\right]\\\nonumber
&+c_s^{-2}c_p^{-2}{\mathbb E}\left[\bm u_{p}^{\infty}\left(\hat{x},c_sc_p^{-1}(\omega+\tau)\right)\overline{\bm u_{s}^{\infty}(\hat{x},\omega)}^\top\right]\\\nonumber
&+c_s^{-2}c_p^{-2}{\mathbb E}\left[\bm u_{s}^{\infty}\left(\hat{x},\omega+\tau\right)\overline{\bm u_{p}^{\infty}\left(\hat{x},c_sc_p^{-1}\omega\right)}^\top\right]\\\label{e16}
&+c_s^{-4}{\mathbb E}\left[\bm u_{s}^{\infty}\left(\hat{x},\omega+\tau\right)\overline{\bm u_{s}^{\infty}\left(\hat{x},\omega\right)}^\top\right]\bigg\}d\omega
=|C_d|^2\widehat{A^c}(c_s\tau\hat{x}).
\end{align}

A similar result for the strength $A^r$ of the the relation operator can be obtained based on the same procedure:
\begin{align}\nonumber
\lim_{Q\to\infty}\frac{1}{Q}\int_Q^{2Q}&\omega^{m+3-d}c_s^{m+3-d}\bigg\{c_p^{-4}{\mathbb E}\left[\bm u_{p}^{\infty}\left(\hat{x},c_sc_p^{-1}(\omega+\tau)\right)\bm u_{p}^{\infty}\left(-\hat{x},c_sc_p^{-1}\omega\right)^\top\right]\\\nonumber
&+c_s^{-2}c_p^{-2}{\mathbb E}\left[\bm u_{p}^{\infty}\left(\hat{x},c_sc_p^{-1}(\omega+\tau)\right)\bm u_{s}^{\infty}(-\hat{x},\omega)^\top\right]\\\nonumber
&+c_s^{-2}c_p^{-2}{\mathbb E}\left[\bm u_{s}^{\infty}\left(\hat{x},\omega+\tau\right)\bm u_{p}^{\infty}\left(-\hat{x},c_sc_p^{-1}\omega\right)^\top\right]\\\label{e17}
&+c_s^{-4}{\mathbb E}\left[\bm u_{s}^{\infty}\left(\hat{x},\omega+\tau\right)\bm u_{s}^{\infty}\left(-\hat{x},\omega\right)^\top\right]\bigg\}d\omega=C_d^2\widehat{A^r}(c_s\tau\hat{x}).
\end{align}

The above recovery formulas \eqref{e16} and \eqref{e17}, where the expectation of the correlations between $\bm u_p^\infty$ and $\bm u_s^\infty$ is involved, can be improved by removing the expectation. In fact, according to (\ref{e9})--(\ref{e10}), one can easily find that components $u_{p,j}^{\infty}$ and $u_{s,j}^{\infty}$ are both linear combinations of far-field patterns for acoustic waves given in (\ref{b6}) perturbed by random sources $f_i,$ $i=1,\cdots,d$, which are components of the source $\bm f$. Thus, the estimates of the far-field pattern for acoustic waves given in  Lemma \ref{lem2} also hold for $\bm u_p^\infty$ and $\bm u_s^\infty$. Then following the same procedure used in Theorem \ref{thm3}, we can get that the strengths $A^c$ and $A^r$ can be uniquely recovered from the compressional and shear far-field patterns at a single realization of the random source almost surely, which is stated in the following theorem.

\begin{theorem}%\label{thm9}
Let ${\bm f}$ satisfy Assumption \ref{as2} with $m\in (d-4,d]$. Then for all $\hat{x}\in {\mathbb S}^{d-1}$ and $\tau\geq 0$, it holds almost surely that 
\begin{align}\nonumber
\lim_{Q\to\infty}\frac{1}{Q}\int_Q^{2Q}&c_s^{m+3-d}\omega^{m+3-d}\bigg[c_p^{-4}\bm u_{p}^{\infty}\left(\hat{x},c_sc_p^{-1}(\omega+\tau)\right)\overline{\bm u_{p}^{\infty}\left(\hat{x},c_sc_p^{-1}\omega\right)}^\top\\\nonumber
&+c_s^{-2}c_p^{-2}\bm u_{p}^{\infty}\left(\hat{x},c_sc_p^{-1}(\omega+\tau)\right)\overline{\bm u_{s}^{\infty}(\hat{x},\omega)}^\top\\\nonumber
&+c_s^{-2}c_p^{-2}\bm u_{s}^{\infty}\left(\hat{x},\omega+\tau\right)\overline{\bm u_{p}^{\infty}\left(\hat{x},c_sc_p^{-1}\omega\right)}^\top\\\label{e18}
&+c_s^{-4}\bm u_{s}^{\infty}\left(\hat{x},\omega+\tau\right)\overline{\bm u_{s}^{\infty}\left(\hat{x},\omega\right)}^\top\bigg]d\omega=|C_d|^2\widehat{A^c}(c_s\tau\hat{x}),\\\nonumber
\lim_{Q\to\infty}\frac{1}{Q}\int_Q^{2Q}&c_s^{m+3-d}\omega^{m+3-d}\bigg[c_p^{-4}\bm u_{p}^{\infty}\left(\hat{x},c_sc_p^{-1}(\omega+\tau)\right)\bm u_{p}^{\infty}\left(-\hat{x},c_sc_p^{-1}\omega\right)^\top\\\nonumber
&+c_s^{-2}c_p^{-2}\bm u_{p}^{\infty}\left(\hat{x},c_sc_p^{-1}(\omega+\tau)\right)\bm u_{s}^{\infty}(-\hat{x},\omega)^\top\\\nonumber
&+c_s^{-2}c_p^{-2}\bm u_{s}^{\infty}\left(\hat{x},\omega+\tau\right)\bm u_{p}^{\infty}\left(-\hat{x},c_sc_p^{-1}\omega\right)^\top\\\label{e19}
&+c_s^{-4}\bm u_{s}^{\infty}\left(\hat{x},\omega+\tau\right)\bm u_{s}^{\infty}\left(-\hat{x},\omega\right)^\top\bigg]d\omega=C_d^2\widehat{A^r}(c_s\tau\hat{x}).
\end{align}
Moreover, the strengths $A^c$ and $A^r$ can be uniquely determined by (\ref{e18}) and (\ref{e19}), respectively, with $(\tau, \hat{x})\in\Theta$ and $\Theta\subset\mathbb R_+\times\mathbb S^{d-1}$ being any open domain.
\end{theorem}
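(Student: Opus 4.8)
The plan is to upgrade the expectation-level recovery formulas \eqref{e16}--\eqref{e17}, already established above, to almost-sure statements via an ergodicity argument in the frequency domain that parallels the proof of Theorem \ref{thm3}. The starting observation, as noted before the theorem, is that by \eqref{e9}--\eqref{e10} each scalar component $u_{p,j}^\infty(\hat x,\omega)$ and $u_{s,j}^\infty(\hat x,\omega)$ is a finite linear combination, with $\hat x$-dependent coefficients encoded in $\bm v_{p,j}$ and $\bm v_{s,j}$, of acoustic-type far-field patterns of the components $f_i$ of $\bm f$. Consequently the correlation estimates of Lemma \ref{lem2} transfer to the elastic patterns: for $\bullet,\circ\in\{p,s\}$ and any fixed $N\in\mathbb N$,
\begin{eqnarray*}
\left|{\mathbb E}\left[u_{\bullet,j}^{\infty}(\hat{x},\kappa_1)\overline{u_{\circ,l}^{\infty}(\hat{x},\kappa_2)}\right]\right|\lesssim\kappa_1^{\frac{d-3}{2}}\kappa_2^{\frac{d-3}{2}-m}(1+|\kappa_1-\kappa_2|)^{-N},
\end{eqnarray*}
together with the opposite-sign analogues involving $u_{\circ,l}^\infty(-\hat x,\kappa_2)$ and the no-conjugate versions, whose decay is governed by $(1+\kappa_1+\kappa_2)^{-N}$; the constant rescaling of frequencies by $c_sc_p^{-1}$ only alters the implied constants.

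I would then fix entries $j,l$ and work with the $(j,l)$ entry of the bracketed matrix in \eqref{e18}, which is a fixed linear combination of the four products $u_{\bullet,j}^\infty(\hat x,\cdot)\overline{u_{\circ,l}^\infty(\hat x,\cdot)}$. Decomposing each complex product into real and imaginary parts exactly as the product $u^\infty(\hat x,\kappa+\tau)\overline{u^\infty(\hat x,\kappa)}$ is decomposed in Theorem \ref{thm3}, one writes the integrand as a finite sum of squares of $\mathbb R$-valued centered Gaussian fields $W_\omega$ drawn from families of the type $\Gamma_1$ and $\Gamma_2$. Setting $X_\omega:=\omega^{m+3-d}(W_\omega^2-\mathbb EW_\omega^2)$ and invoking \cite[Theorem 4.1]{CHL} together with \cite[Lemma 6]{LLW2}, the almost-sure counterpart of \eqref{b29} reduces to a correlation decay bound $|\mathbb E[X_\omega X_{\omega+t}]|\le C_\tau(1+|t-\eta|)^{-\beta}$ for suitable $\eta\ge0$ and $\beta>0$; by Gaussianity this amounts to controlling
\begin{eqnarray*}
\left({\mathbb E}\left[\omega^{\frac{m+3-d}{2}}(\omega+t)^{\frac{m+3-d}{2}}W_{\omega}W_{\omega+t}\right]\right)^2\le C_\tau(1+|t-\eta|)^{-\beta},
\end{eqnarray*}
which follows by feeding the transferred Lemma \ref{lem2} bounds into precisely the estimates \eqref{b37} and \eqref{b39}.

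Summing the four products and using the collapse identity $\mathscr R({\bm v}_{p,j}{\bm v}_{p,l}^{\top}+{\bm v}_{p,j}{\bm v}_{s,l}^{\top}+{\bm v}_{s,j}{\bm v}_{p,l}^{\top}+{\bm v}_{s,j}{\bm v}_{s,l}^{\top})=\mathscr R(\bm e_j\bm e_l^\top)$ already exploited in deriving \eqref{e16}, the almost-sure frequency averages add up entrywise to $|C_d|^2\widehat{A^c}(c_s\tau\hat x)$, yielding \eqref{e18}; the relation formula \eqref{e19} is identical, using the opposite-sign bounds and $C_d^2\widehat{A^r}$ in place of $|C_d|^2\widehat{A^c}$. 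Finally, since $A^c,A^r\in C_0^\infty(D;\mathbb C^{d\times d})$ are entrywise analytic, knowledge of $\widehat{A^c}(c_s\tau\hat x)$ and $\widehat{A^r}(c_s\tau\hat x)$ for $(\tau,\hat x)$ ranging over any open $\Theta\subset\mathbb R_+\times\mathbb S^{d-1}$ determines them uniquely. I expect the main obstacle to be the careful bookkeeping of the mixed $p$--$s$ cross-correlations at the rescaled frequencies $c_sc_p^{-1}\omega$ versus $\omega$: one must confirm that the frequency arguments align so that every product falls into one of the four decay regimes of Lemma \ref{lem2}, after which the polarization decomposition and the summation identity are routine.
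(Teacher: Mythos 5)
Your proposal is correct and follows essentially the same route as the paper: the paper likewise derives the expectation-level formulas \eqref{e16}--\eqref{e17} via the collapse identity $\mathscr R({\bm v}_{p,j}{\bm v}_{p,l}^{\top}+{\bm v}_{p,j}{\bm v}_{s,l}^{\top}+{\bm v}_{s,j}{\bm v}_{p,l}^{\top}+{\bm v}_{s,j}{\bm v}_{s,l}^{\top})=\mathscr R(\bm e_j\bm e_l^\top)$, observes that $u_{p,j}^{\infty}$ and $u_{s,j}^{\infty}$ are linear combinations of acoustic far-field patterns of the components $f_i$ so that the estimates of Lemma \ref{lem2} transfer, and then invokes the ergodicity machinery of Theorem \ref{thm3} verbatim to remove the expectation. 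The only caveat is cosmetic: your grouping of which correlations decay like $(1+|\kappa_1-\kappa_2|)^{-N}$ versus $(1+\kappa_1+\kappa_2)^{-N}$ is stated loosely (the no-conjugate opposite-sign product has difference decay, cf. \eqref{b17}, while the no-conjugate same-sign product has sum decay, cf. \eqref{b19}), but this does not affect the argument.
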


\section{Conclusion}

In this paper, we have discussed the direct and inverse random source problems for acoustic waves, biharmonic waves, electromagnetic waves, and elastic waves. The source is assumed to be a centered GMIG random field whose covariance and relation operators are classical pseudo-differential operators. For such a rough source, the unique solvability is achieved for a larger class of distributions compared with the existing results. The inverse problem is to recover the principal symbols of the covariance and relation operators. A relationship is established in the high frequency limit which connects the principal symbols of the covariance and relation operators and the far-field pattern averaged over the frequency band generated from a single realization of the random source. Based on the relationship, the uniqueness of the inverse problem is obtained. 

A possible continuation of this work is to study the stochastic wave equations with a random potential, where both the source and the potential are complex-valued GMIG random fields. These problems are more challenging due to the nonlinearity and coupling of the random source and potential. We hope to be able to report the progress on these problems elsewhere in the future.

\end{document}